\theoremstyle{plain}
\newtheorem{theorem}{Theorem}
\newtheorem{lemma}[theorem]{Lemma}
\newtheorem{proposition}[theorem]{Proposition}
\newtheorem{corollary}[theorem]{Corollary}
\newtheorem{claim}[theorem]{Claim}
\theoremstyle{definition}
\newtheorem{definition}[theorem]{Definition}
\newtheorem{example}[theorem]{Example}
\theoremstyle{remark}
\newtheorem{remark}[theorem]{Remark}
\newcommand{\G}{\mathcal{G}}
\newcommand{\E}{\widehat{E}}
\newcommand{\ucong}{\rotatebox{90}{$\cong$}}
\newcommand{\X}{\widehat{X}}
\title{On the Baum-Connes conjecture for Gromov monster groups} 
\date{September 2014}
\author{Martin Finn-Sell}
\thanks{Martin Finn-Sell: Georg-August Universit\"{a}t G\"{o}ttingen, email: mfinnse@uni-math.gwdg.de}
\thanks{MSC:22A22 30L05 20F65, keywords: groupoids, semigroups, expander graphs, Baum-Connes conjecture.}
\begin{document}
\bibliographystyle{alpha}
\begin{abstract}
We present a geometric approach to the Baum-Connes conjecture with coefficients for Gromov monster groups via a theorem of Khoskham and Skandalis. Secondly, we use recent results concerning the a-T-menability at infinity of large girth expanders to exhibit a family of coefficients for a Gromov monster group for which the Baum-Connes conjecture is an isomorphism.
\end{abstract}

\maketitle

\section{Introduction}

It is known that any group $\Gamma$ that contains a coarsely embedded large girth expander sequence $X$ does not have Yu's property A and admits coefficients for which the Baum-Connes conjecture fails to be a surjection, but is an injection\cite{MR1911663,explg1}.

We explore this situation from the point of view of the partial geometry that can be associated to the expander graph $X$ that it inherits from the group $\Gamma$. The advantage of this geometric approach over the analytic one presented in \cite{MR1911663,explg1} is that we can refine it to get positive results to this conjecture via a groupoid construction similar to that of \cite{MR1905840}.

This groupoid approach to coarse geometry first formulates the coarse Baum-Connes conjecture for any uniformly discrete bounded geometry space $X$ using a version of the Baum-Connes conjecture with coefficients for a \'etale groupoid $G(X)$ associated to $X$. This groupoid admits always transformation groupoid decomposition; using Lemma 3.3b) from \cite{MR1905840} we can decompose $G(X)$ using the generators for the metric coarse structure on $X$. Each set of such generators carries its own partial geometric data and these can be studied independently. A systematic approach to this was outlined in \cite{MR2363428}.

When $X$ is coarsely embedded into a group, the group gives rise to a natural generating set of the metric coarse structure on $X$. We outline the construction of these generators below, and one object of this paper is to describe precisely how these generators behave combinatorially. 

\begin{example}\label{ex:1}
Let $\Gamma$ be a finitely generated discrete group with a left invariant word metric and let $f:X \rightarrow \Gamma$ be an injective coarse embedding. We can now identify freely $X$ as a subset of the Cayley graph of $\Gamma$ and so $X$ inherits a metric that shares a coarse type with the original metric. What we gain by doing this is access of the right action of $\Gamma$ on itself. Consider the maps:
\begin{equation*}
t_{g}: \Gamma \rightarrow \Gamma , x \mapsto xg^{-1}
\end{equation*}
We can now restrict these maps to $X$, where they may not be defined everywhere. If we denote the set of points in $X$ with image in $X$ by $D_{g}$ then we have:
\begin{equation*}
t_{g}: D_{g} \rightarrow D_{g^{-1}} , x \mapsto xg^{-1}
\end{equation*}
Let $\mathcal{T}_{X}$ denote the collection of $t_{g}$ restricted to $X$. These maps are partial translations of $X$ (i.e maps that are bijections between subsets of $X$ that move points of $X$ uniformly bounded distances). Additionally, by transitivity properties of the action of $\Gamma$ on itself, these maps generate the metric coarse structure on $X$. This construction is an example of a \textit{partial action} of $\Gamma$.
\end{example}

From these generators we construct a second countable, locally compact, Hausdorff, \'etale groupoid following the techniques of \cite{MR2419901}, which explicitly implements the transformation decomposition of \cite{MR1905840}. The techniques we use here are purely semigroup and order theoretic, following the ideas of \cite{MR2041539,MR2465914,MR2419901,MR2974110}.

A natural problem we have to consider is how to relate this data on $X$ back to $\Gamma$. We solve problem by constructing a Morita equivalence between the groupoid constructed from the data of Example \ref{ex:1} and a transformation groupoid involving an action of $\Gamma$. Results of this nature have been developed that cover the groupoid we construct from Example \ref{ex:1}, i.e those that  that are are constructed from suitable inverse semigroups were notably developed by Khoskham and Skandalis \cite{MR1900993} and further extended by Milan and Steinberg \cite{MR3231226}. Their results all use the following concept:

\begin{definition}\label{def:cocycle}
Let $\G$ be a locally compact groupoid. Then we call a continuous homomorphism from $\G$ to a locally compact group $\Gamma$ a \textit{group valued cocycle}. 
\end{definition}

The main result of Khoshkam and Skandalis \cite{MR1900993} that we will appeal to is stated below. The details concerning cocycles and all the topological criteria are recalled in Section \ref{sect:cocycle}.

\begin{theorem}
Let $\rho: \G \rightarrow \Gamma$ be a continuous, faithful, closed, transverse cocycle. Then there is a universal locally compact Hausdorff $\Gamma$-space $\Omega$ that envelopes the space $\G^{(0)}$ and a Morita equivalence of $\G$ with $\Omega\rtimes \Gamma$.\qed
\end{theorem}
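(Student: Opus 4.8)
The plan is to realise $\G$ as the transformation groupoid of a partial action of $\Gamma$ on $\G^{(0)}$, and then to obtain $\Omega$ as the enveloping (globalisation) space of that partial action. First I would use faithfulness of $\rho$ to decompose $\G$ fibrewise over $\Gamma$. For each $g \in \Gamma$ set $\G_g = \rho^{-1}(g)$; faithfulness is exactly the statement that $(r,s) : \G_g \to \G^{(0)} \times \G^{(0)}$ is injective, so $\G_g$ is the graph of a partial bijection $\theta_g : D_{g^{-1}} \to D_g$, where $D_{g^{-1}} = s(\G_g)$, $D_g = r(\G_g)$, and $\theta_g(s(\gamma)) = r(\gamma)$ for $\gamma \in \G_g$. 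The cocycle identity $\rho(\gamma\eta) = \rho(\gamma)\rho(\eta)$ forces the composition law $\theta_g \circ \theta_h \subseteq \theta_{gh}$, while transversality is precisely what guarantees that each $D_g$ is open and each $\theta_g$ is a homeomorphism. Thus $\theta = (\{D_g\}_g, \{\theta_g\}_g)$ is a continuous partial action of $\Gamma$ on $\G^{(0)}$ with $\G \cong \G^{(0)} \rtimes_\theta \Gamma$.

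Next I would globalise. Form $\Gamma \times \G^{(0)}$ and pass to the quotient $\Omega$ by the enveloping-action relation identifying $(gh, x)$ with $(g, \theta_h(x))$ whenever $x \in D_{h^{-1}}$, equipped with the $\Gamma$-action induced by left translation in the first coordinate. By construction $\G^{(0)}$ embeds into $\Omega$ as the image of $\{e\} \times \G^{(0)}$, this embedding is open, the $\Gamma$-translates of $\G^{(0)}$ cover $\Omega$, and the universal property is immediate because any global action restricting to $\theta$ receives a canonical $\Gamma$-equivariant map out of $\Omega$. This is the enveloping-action construction of Abadie and Exel, and I would cite their work rather than rebuild it.

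The main obstacle is topological: establishing that $\Omega$ is Hausdorff and locally compact. Local compactness and openness of the inclusion $\G^{(0)} \hookrightarrow \Omega$ follow from transversality, which keeps the $D_g$ open and the gluing maps homeomorphisms; but Hausdorffness of the quotient is exactly where the hypothesis that $\rho$ is \emph{closed} enters. Concretely, the diagonal of $\Omega \times \Omega$ is closed precisely when the graphs of the $\theta_g$ accumulate in a controlled way, and closedness of the cocycle is what prevents two distinct $\Gamma$-translates of $\G^{(0)}$ from being glued along a non-closed set. I expect this separation argument to be the technical heart of the proof.

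Finally, with $\Omega$ in hand the Morita equivalence is essentially formal. Since $\G^{(0)}$ is an open subset of $\Omega$ meeting every $\Gamma$-orbit, it is a full open transversal for $\Omega \rtimes \Gamma$, and the reduction $(\Omega \rtimes \Gamma)|_{\G^{(0)}}$ is canonically isomorphic to the partial transformation groupoid $\G^{(0)} \rtimes_\theta \Gamma \cong \G$. Invoking the standard fact that restricting a groupoid to a full open subset of its unit space yields an equivalent groupoid, with linking space $\{(\omega,\gamma) \in \Omega \rtimes \Gamma : r(\omega,\gamma) \in \G^{(0)}\} \cong \Gamma \times \G^{(0)}$, then delivers the equivalence of $\G$ with $\Omega \rtimes \Gamma$ and completes the argument.
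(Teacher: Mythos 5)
Your strategy founders at its first step, and the failure is not cosmetic. You claim that faithfulness makes each fibre $\G_{g}=\rho^{-1}(g)$ the graph of a partial bijection of $\G^{(0)}$. It does not: faithfulness is injectivity of $\gamma \mapsto (r(\gamma),\rho(\gamma),s(\gamma))$, which on a fixed fibre only says that $(r,s)$ identifies $\G_{g}$ with a \emph{relation} on $\G^{(0)}$; for $\G_{g}$ to be the graph of a partial bijection one needs $s|_{\G_{g}}$ and $r|_{\G_{g}}$ to be injective separately, and that is equivalent to the kernel subgroupoid $\rho^{-1}(e_{\Gamma})$ consisting only of units --- a strictly stronger hypothesis than faithfulness. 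A minimal counterexample: let $\G$ be the pair groupoid on two points and $\rho$ the trivial cocycle to the trivial group. This cocycle is continuous, faithful, closed and transverse, and the theorem holds (here $\Omega$ is a single point, and the pair groupoid is indeed Morita equivalent to a point), yet $\G$ is not the transformation groupoid of any partial action of the trivial group, so there is no partial action to globalise. The content of the theorem in such cases is precisely that arrows with trivial cocycle value joining distinct units are absorbed into the quotient defining $\Omega$: the relation of Khoshkam and Skandalis identifies $(x,g)\sim(y,h)$ whenever some $\gamma$ has $s(\gamma)=x$, $r(\gamma)=y$, $\rho(\gamma)=h^{-1}g$, whereas your enveloping-action relation only identifies points along the partial homeomorphisms $\theta_{h}$ and never glues two points of $\{e\}\times\G^{(0)}$ to one another. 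For the same reason, your final step --- that $\G^{(0)}$ sits inside $\Omega$ as an open \emph{full transversal} whose reduction recovers $\G$ --- is unavailable in general: the reduction of $\Omega\rtimes\Gamma$ to the image of $\G^{(0)}$ can be strictly smaller than $\G$.

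Two further restrictions are worth recording. First, your fibrewise decomposition presumes $\Gamma$ discrete (otherwise the fibres $\G_{g}$ cannot all have open sources, and a partial action of a topological group requires continuity in $g$), while the statement, via Definition \ref{def:cocycle}, concerns locally compact $\Gamma$. Second, the paper does not reprove this theorem at all --- the \qed\ marks it as quoted from \cite{MR1900993} --- and the proof there requires no decomposition of $\G$: one exhibits $\G^{(0)}\times\Gamma$ directly as an equivalence bimodule, with $\G$ acting on the left by $\gamma\cdot(s(\gamma),g)=(r(\gamma),\rho(\gamma)g)$ and $\Omega\rtimes\Gamma$ acting on the right by translation; freeness of the left action is exactly faithfulness, properness and Hausdorffness of $\Omega$ come from closedness, and transversality makes the quotient maps open. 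Your picture is correct, and matches the paper's own remark identifying $\Omega$ with the globalisation of a partial action, precisely in the special case relevant to this paper --- $\Gamma$ discrete and $\rho^{-1}(e_{\Gamma})=\G^{(0)}$, as holds for the cocycles built from the inverse monoid generated by $\mathcal{T}_{X}$ --- but it does not prove the theorem as stated.
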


The locally compact Hausdorff space $\Omega$ is called the \textit{Morita envelope} of the cocycle $\rho$, and is constructed as a quotient of the product $\G^{(0)}\times \Gamma$ under an equivalence relation and includes a topologically embedded copy of $\G^{(0)}$.

We will appeal to the theorem above (that we recall as Theorem \ref{Thm:1.8} in the text) to decompose the coarse groupoid $G(X)$ when $X$ is a large girth expander $X$ that is coarsely embedded into a finitely generated group $\Gamma$. This transformation groupoid involving $\Gamma$ will then be Morita equivalent to $G(X)$. We exploit this connection between $G(X)$ and $\Gamma$ to obtain the results concerning the Baum-Connes conjecture with coefficients for $\Gamma$.

In particular we prove two results via a partial translation method, the first of which is:

\begin{theorem}\label{Thm:MTa}
Let $\Gamma$ be a finitely generated group, $X$ be a large girth expander and let $f:X \hookrightarrow \Gamma$ be a coarse embedding. Then the coarse groupoid $G(X)$ is Morita equivalent to $\Omega_{\beta X}\rtimes \Gamma$, where $\Omega_{\beta X}$ is the enveloping locally compact Hausdorff $\Gamma$-space arising from the continuous extension of the partial action of $\Gamma$ on $X$ to $\beta X$.
\end{theorem}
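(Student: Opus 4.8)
The plan is to realise the equivalence as an instance of Theorem \ref{Thm:1.8}: I construct a canonical group valued cocycle (Definition \ref{def:cocycle}) $\rho\colon G(X)\to\Gamma$ out of the partial translations $\mathcal{T}_X$ of Example \ref{ex:1}, check that it is continuous, faithful, closed and transverse, and then identify the abstract Morita envelope $\Omega$ furnished by the theorem with the concrete globalization $\Omega_{\beta X}$ of the induced partial action on $\beta X$. It is worth noting in advance that the construction uses only that $f$ is an injective coarse embedding of a bounded geometry space, the expansion and large girth being reserved for the later a-T-menability results.

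First I would recall the coarse groupoid. Writing $\Delta_R=\{(x,y)\in X\times X: d(x,y)\le R\}$ for the $R$-entourage and taking closures inside $\beta(X\times X)$, one sets $G(X)=\bigcup_{R}\overline{\Delta_R}$ with unit space $\beta X$; this is a second countable, locally compact, Hausdorff, \'etale groupoid. Using the injective coarse embedding to regard $X\subseteq\Gamma$, define $\rho$ on the dense pair part by $\rho(x,y)=x^{-1}y$. Because $f$ is a coarse embedding and $\Gamma$ is finitely generated, the set $\{x^{-1}y:(x,y)\in\Delta_R\}$ is finite for each $R$, so $\rho$ is locally constant on $\Delta_R$, extends continuously to $\overline{\Delta_R}$, and the identity $\rho(x,z)=\rho(x,y)\rho(y,z)$ valid on the pair groupoid passes to the closure. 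By construction the partial bijections $x\mapsto xg$ are exactly the level sets of $\rho$, so $\rho$ is precisely the cocycle that encodes the partial action $\mathcal{T}_X$.

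The topological conditions then reduce to a single combinatorial observation. For each $R$ the fibre decomposition $\overline{\Delta_R}=\bigsqcup_{g}\overline{\Delta_R^{\,g}}$ along the finitely many values of $\rho$ is a partition into compact open pieces, where $\Delta_R^{\,g}=\{(x,xg): xg\in X\}\cap\Delta_R$ is the graph of a partial bijection of $X$; hence the closure of each piece is homeomorphic, via the range map, to the closure in $\beta X$ of the domain of that bijection, which is clopen because closures of subsets of the discrete set $X$ are clopen in $\beta X$. This immediately gives that $(r,\rho)\colon G(X)\to\beta X\times\Gamma$ is injective (faithfulness), and that the preimage of any $\beta X\times F$ with $F\subseteq\Gamma$ finite is a finite union of such compact pieces, whence $(r,\rho)$ is proper and closed. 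Transversality is where \'etaleness of $G(X)$ and discreteness of $\Gamma$ do the work: the generating bisections $\overline{\Delta_R^{\,g}}$ exhibit $(r,\rho)$ as a local homeomorphism onto its image, so $\beta X$ sits inside the envelope as an open transversal meeting every $\Gamma$-orbit.

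Granting the four conditions, Theorem \ref{Thm:1.8} yields a Morita equivalence of $G(X)$ with $\Omega\rtimes\Gamma$, where $\Omega=(\beta X\times\Gamma)/\!\!\sim$ is the quotient by the relation generated by $(r(\gamma),h)\sim(s(\gamma),\rho(\gamma)^{-1}h)$. The final and, to my mind, principal step is to identify this $\Omega$ with $\Omega_{\beta X}$. Here I would invoke the dictionary of Milan and Steinberg \cite{MR3231226} between faithful cocycles on \'etale groupoids and partial actions: the extension of the partial translations $t_g\colon D_g\to D_{g^{-1}}$ to the clopen sets $\overline{D_g}\subseteq\beta X$ is a genuine partial action of $\Gamma$ (the algebraic relations persist from $X$ to $\beta X$ by continuity), and the Khoshkam--Skandalis relation above is precisely its orbit relation, so $\Omega$ is the globalization of this extended partial action, i.e. $\Omega_{\beta X}$. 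The main obstacle I anticipate is making this last identification fully rigorous: one must verify that the extended maps on $\beta X$ satisfy the partial action axioms exactly, that the resulting globalization is Hausdorff and locally compact, and that the embedding $\beta X\hookrightarrow\Omega_{\beta X}$ as a transversal matches the embedded copy of $G(X)^{(0)}$ supplied by the theorem; the transversality bookkeeping, rather than the cocycle construction, is the delicate part.
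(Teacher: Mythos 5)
Your proposal is correct in outline, but it takes a genuinely different route from the paper. The paper never writes the cocycle $\rho(x,y)=x^{-1}y$ directly on the pair groupoid; instead it runs entirely through inverse semigroup theory: Lemma \ref{Lem:PTS} shows the monoid $S$ generated by $\mathcal{T}_{X}$ is $0$-F-inverse, Proposition \ref{Prop:3.37} and Corollary \ref{Cor:Trick} upgrade this to \emph{strongly} $0$-F-inverse via the Birget--Rhodes expansion and freeness of the $\Gamma$-action, Corollary 6.17 of \cite{MR3231226} then provides a (T,C,F)-cocycle on the universal groupoid $\G(\mathcal{T}_{X})$, and Lemma \ref{Lem:CG} (the decomposition $G(X)\cong\beta X\rtimes\G(\mathcal{T}_{X})$) together with Proposition \ref{Prop:Cocycle} transports that cocycle to $G(X)$ before Theorem \ref{Thm:1.8} is invoked. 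You bypass all of this machinery and build the cocycle by hand from the clopen decomposition $\overline{\Delta_{R}}=\bigsqcup_{g}\overline{\Delta_{R}^{\,g}}$, verifying (T,C,F) directly; in effect you reprove the Milan--Steinberg input in this concrete case. What your route buys is a self-contained, elementary argument with a completely explicit cocycle, and a careful identification of the Khoshkam--Skandalis envelope with the globalization of the extended partial action on $\beta X$ --- a point the paper leaves essentially implicit, since it simply defines $\Omega_{\beta X}$ by the relation $\sim$. What the paper's route buys is reusable structure: the second countable groupoid $\G(\mathcal{T}_{X})$ and the decomposition $G(X)\cong\beta X\rtimes\G(\mathcal{T}_{X})$ are needed again in the proof of Theorem \ref{Thm:MT2} and in the non-K-exactness argument, so the semigroup detour is not idle. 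Two small repairs to your write-up: $G(X)$ is \emph{not} second countable (its unit space is $\beta X$); and in the closedness/properness step, the assertion that $\rho^{-1}(g)$ is a single compact set $\overline{\Delta_{R}^{\,g}}$ for $R$ large enough uses the lower control function of the coarse embedding (bounded displacement in $\Gamma$ forces bounded displacement in $X$) --- this is precisely where coarse embeddability, rather than mere bounded-distortion in one direction, enters, and it should be said explicitly since otherwise the preimage of $\beta X\times\{g\}$ could fail to be compact and the map $(r,\rho,s)$ could fail to be closed.
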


We remark first that $\Omega_{\beta X}$ depends on the unit space of $G(X)$ and second that this result has the results from Section 8 of Willett and Yu \cite{explg1} as a Corollary using homological results of \cite{mypub1}.

The second result requires a \textit{groupoid reduction} of of $G(X)$ to $\partial\beta X$: a groupoid that we call the \textit{coarse boundary groupoid} of $X$ and is denoted by $G(X)|_{\partial\beta X}$. The boundary coarse groupoid $G(X)|_{\partial\beta X}$ satisfies an analogue of Theorem \ref{Thm:MTa}:

\begin{theorem}\label{Thm:MTb}
Let $\Gamma$ be a finitely generated group, $X$ be a large girth expander and let $f:X \hookrightarrow \Gamma$ be a coarse embedding. Then the coarse boundary groupoid $G(X)|_{\partial\beta X}$ is Morita equivalent to $\Omega_{\partial\beta X}\rtimes \Gamma$.
\end{theorem}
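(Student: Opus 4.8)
The plan is to mirror the proof of Theorem \ref{Thm:MTa}, which establishes the Morita equivalence $G(X) \sim \Omega_{\beta X} \rtimes \Gamma$ by producing a continuous, faithful, closed, transverse cocycle $\rho : G(X) \to \Gamma$ and invoking Theorem \ref{Thm:1.8}. The key observation is that the coarse boundary groupoid $G(X)|_{\partial\beta X}$ is the restriction of $G(X)$ to the closed, $G(X)$-invariant subset $\partial\beta X = \beta X \setminus X$ of the unit space $\beta X$. Restriction to a closed saturated subset of the unit space is compatible with cocycles, so I would first verify that the restricted cocycle $\rho|_{\partial\beta X} : G(X)|_{\partial\beta X} \to \Gamma$ inherits all four hypotheses of Theorem \ref{Thm:1.8} from the ambient cocycle $\rho$ on $G(X)$.

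\textbf{Step 1.} I would recall that the cocycle $\rho$ constructed for Theorem \ref{Thm:MTa} arises from the partial translation structure of Example \ref{ex:1}: an element of $G(X)$ near $(x,y)$ determined by a partial translation $t_g$ is sent to $g \in \Gamma$. Because $\partial\beta X$ is $G(X)$-invariant, the groupoid $G(X)|_{\partial\beta X}$ has object space $\partial\beta X$ and morphisms those arrows of $G(X)$ whose source and range both lie in $\partial\beta X$; the cocycle condition $\rho(\gamma_1\gamma_2) = \rho(\gamma_1)\rho(\gamma_2)$ is automatic on this subgroupoid.

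\textbf{Step 2.} I would check the four topological conditions in turn. Continuity of $\rho|_{\partial\beta X}$ is immediate since it is the restriction of a continuous map to a subspace with the subspace topology. Faithfulness (injectivity on isotropy, equivalently freeness of the induced action) is inherited because faithfulness is a fibrewise condition: if it holds for every unit of $G(X)$ it holds a fortiori for every unit lying in the closed subset $\partial\beta X$. Transversality is likewise a local condition on the unit space — the cocycle admits local sections transverse to the $\Gamma$-orbits — and these sections restrict to $\partial\beta X$. The one hypothesis requiring genuine care is \emph{closedness} of the cocycle, which is a global properness-type condition on the map $G(X)|_{\partial\beta X} \to G(X)|_{\partial\beta X}^{(0)} \times \Gamma$; I would argue that since $\partial\beta X$ is closed in $\beta X$, the restriction of a closed cocycle to a closed invariant subset remains closed.

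\textbf{Step 3.} Having verified the hypotheses, Theorem \ref{Thm:1.8} applies directly to $\rho|_{\partial\beta X}$, yielding a Morita envelope which, by construction as a quotient of $(G(X)|_{\partial\beta X})^{(0)} \times \Gamma = \partial\beta X \times \Gamma$, is precisely $\Omega_{\partial\beta X}$, together with a Morita equivalence $G(X)|_{\partial\beta X} \sim \Omega_{\partial\beta X} \rtimes \Gamma$. I expect the main obstacle to be the closedness condition: while continuity, faithfulness, and transversality are manifestly fibrewise or local and thus stable under restriction to a closed invariant subspace, closedness is a properness statement that can in principle be destroyed by passing to a subgroupoid. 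The resolution should hinge on the fact that $\partial\beta X$ is closed (indeed compact, being closed in the compact space $\beta X$), so that the restricted map factors through a closed subspace and properness is preserved; I would make this precise by tracing the definition of closed cocycle recalled in Section \ref{sect:cocycle} and checking that the defining closed-graph or proper-map property is stable under pullback along the closed inclusion $\partial\beta X \hookrightarrow \beta X$.
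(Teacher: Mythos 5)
Your proposal is correct and follows essentially the same route as the paper: the paper restricts the (T,C,F)-cocycle of Proposition \ref{Prop:Cocycle} to the closed saturated subset $\partial\beta X$, verifies in Lemma \ref{Lem:Top} exactly the inheritance of faithfulness, transversality and closedness that you outline (with closedness handled, as you anticipate, via the closedness of $\partial\beta X$ in $\beta X$), and then applies Theorem \ref{Thm:1.8}. The identification of the resulting envelope with $\Omega_{\partial\beta X}$, which you address in Step 3, is exactly what Lemma \ref{Lem:Cut} supplies.
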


By results of \cite{mypub1} this implies the transformation groupoid $\Omega_{\partial\beta X}\rtimes \Gamma$ is a-T-menable, hence we get the following natural corollary:

\begin{corollary}
Let $\Gamma$ be a discrete group that coarsely contains a large girth expander $X$ and let $\Omega_{\partial\beta X}$ be the universal enveloping $\Gamma$-space for the partial action on $\partial\beta X$. Then the Baum-Connes assembly map for $\Gamma$ with coefficients in any $C_{0}(\Omega_{\partial\beta X})$-algebra is an isomorphism.
\end{corollary}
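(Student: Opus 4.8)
The plan is to derive this Corollary as a direct consequence of Theorem~\ref{Thm:MTb} together with the quoted a-T-menability result from \cite{mypub1} and the known stability of the Baum-Connes conjecture under the Higson--Kasparov machinery. First I would invoke Theorem~\ref{Thm:MTb} to replace the coarse boundary groupoid $G(X)|_{\partial\beta X}$ by the Morita equivalent transformation groupoid $\Omega_{\partial\beta X}\rtimes\Gamma$. Since $X$ is a large girth expander coarsely embedded into $\Gamma$, the hypotheses of that theorem are met, and the results of \cite{mypub1} then tell us that this transformation groupoid is a-T-menable (equivalently, that the partial action of $\Gamma$ on $\partial\beta X$ is a-T-menable, or has the Haagerup property at the level of the associated groupoid). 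This is the structural input that makes the Baum-Connes conjecture accessible.

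Next I would appeal to the Higson--Kasparov theorem in its groupoid formulation: an a-T-menable, second countable, locally compact, Hausdorff groupoid satisfies the Baum-Connes conjecture with arbitrary coefficients, the assembly map being an isomorphism. Applying this to $\Omega_{\partial\beta X}\rtimes\Gamma$ shows that its Baum-Connes assembly map is an isomorphism for every coefficient $C^{*}$-algebra on which it acts. The key observation is that any $C_{0}(\Omega_{\partial\beta X})$-algebra is precisely the sort of coefficient algebra that carries a compatible action of the transformation groupoid $\Omega_{\partial\beta X}\rtimes\Gamma$: the $C_{0}(\Omega_{\partial\beta X})$-module structure encodes the action on the unit space, so such an algebra is a $\Gamma$-$C^{*}$-algebra fibred over $\Omega_{\partial\beta X}$ and hence a genuine coefficient for the crossed product.

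The final step is to transport the isomorphism back from the transformation groupoid to $\Gamma$ itself. Here I would use that Baum-Connes with coefficients is invariant under Morita equivalence, and more pointedly that a $\Gamma$-$C^{*}$-algebra which is a $C_{0}(\Omega_{\partial\beta X})$-algebra is exactly a coefficient algebra for the transformation groupoid. Thus the assembly map for $\Gamma$ with coefficients in a $C_{0}(\Omega_{\partial\beta X})$-algebra $A$ coincides (via the descent and the identification of the $K$-theory groups on both sides) with the assembly map for $\Omega_{\partial\beta X}\rtimes\Gamma$ with coefficients in $A$, which we have just shown to be an isomorphism. Combining the three steps yields the stated conclusion.

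I expect the main obstacle to be the precise bookkeeping in the final step, namely verifying that every $C_{0}(\Omega_{\partial\beta X})$-algebra does give a legitimate coefficient for the transformation groupoid and that the two assembly maps are identified under the Morita equivalence in a way compatible with coefficients. One must ensure that the topological conditions on the cocycle underlying Theorem~\ref{Thm:MTb} (faithful, closed, transverse) are strong enough that the Morita equivalence is implemented by an equivalence of the relevant $C^{*}$-categories with coefficients, so that the induced isomorphism on topological $K$-theory intertwines the two assembly maps. Once this compatibility is in place, the a-T-menability from \cite{mypub1} supplies the isomorphism on the groupoid side and the result for $\Gamma$ follows immediately.
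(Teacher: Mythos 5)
Your proposal is correct and follows essentially the same route as the paper: Morita equivalence of $G(X)|_{\partial\beta X}$ with $\Omega_{\partial\beta X}\rtimes\Gamma$ via the (T,C,F)-cocycle machinery, a-T-menability of the transformation groupoid from \cite{mypub1} together with Morita invariance of a-T-menability, and then Tu's groupoid version of the Higson--Kasparov theorem to get the assembly isomorphism, with $C_{0}(\Omega_{\partial\beta X})$-algebras identified as coefficients for the transformation groupoid. Your explicit attention to the final bookkeeping step (matching the $\Gamma$-assembly map with the groupoid assembly map under this identification) is in fact more careful than the paper, which leaves that identification implicit.
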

This result is the first positive result concerning the Baum-Connes conjecture with coefficients for these examples of non-exact groups. The author would like to remark that results of a similar nature were obtained in \cite{BGW-exact2013} using purely analytic methods.

\section*{Acknowledgments} The author would like to thank his external thesis examiner, Paul Baum, for being interested in these results and methods. Secondly, the author would like to thank the EPSRC and the University of Southampton through which the authors PhD. was funded.

\numberwithin{theorem}{section}
\setcounter{theorem}{0}

\section{Inverse semigroups and constructing groupoids}
In the following sections we briefly outline the inverse semigroup and groupoid framework that go into the proofs of Theorems \ref{Thm:MTa} and \ref{Thm:MTb}. 

Let $S$ be a semigroup and let $s \in S$. Then $u$ is said to be a \textit{semigroup inverse} for $s$ if $sus=s$ and $usu=u$. A semigroup $S$ is \textit{regular} if every element $s$ has some inverse element $u$, and \textit{inverse} if that inverse element is unique, in which case we denote it by $s^{*}$. Groups are examples of semigroups that are inverse with only a single idempotent. It is clear that in an inverse semigroup $S$ every element $ss^{*}$ is idempotent, and this classifies the structure of idempotents (see, for instance, chapter 5 of \cite{MR1455373}).

We remark also that the idempotents of $S$ form a commutative inverse subsemigroup $E(S)$ that can be partially ordered using this multiplication. Let $e,f \in E(S)$ then:
\begin{equation*}
e \leq f \Leftrightarrow ef=e
\end{equation*}
is a partial order on $E(S)$ that extends to $S$ using:
\begin{equation*}
s \leq t \Leftrightarrow (\exists e \in E) et=s
\end{equation*}

\begin{example}
Let $X$ be a set. The set of all partial bijections of $X$, that is bijections between subsets of $X$, is a inverse monoid that we denote by $I(X)$. If $s$ and $t$ are partial bijections of $X$ then the partial order describes precisely when $s$ is a restriction of $t$ to some subset of the domain of $t$.
\end{example}

In light of this example, we can define the class of inverse semigroups relevent to our study:

\begin{definition}
Let $z \in S$. We say $z$ is a zero element if $z \in E(S)$ and $zs=sz=z$. The empty partial bijection is an example of such a zero element. With this in mind we say $S$ is 0-E-unitary if for every non-zero idempotent $e\in E(S)$ and $s\in S$ we have $e \leq s$ implies $s \in E$.
\end{definition}

For concrete semigroups of partial bijections this condition is equivalent to the restriction of any element never being an idempotent.

Let $S$ be a inverse semigroup. We can define a \text{universal group} for $S$ \cite{MR745358}. This is the group generated by the elements of $S$ with relations $s\cdot t = st$ if $st \not = 0$, and denoted by $U(S)$. Clearly, there is a map $\Phi$ from $S$ to the universal group $U(S)$, given by mapping $s$ to its corresponding symbol in $U(S)$. This map is \textit{not} a homomorphism, but after adjoining a zero element to $U(S)$ does satisfy the inequality $\Phi(st) \leq \Phi(s)\Phi(t)$. Such a map is called a \textit{prehomomorphism}. 

\begin{definition}
Let $S$ be a 0-E-unitary inverse monoid. Then $S$ is \textit{0-F-inverse} if the preimage of each group element in $U(S)$ has a maximal element within the partial order of $S$. In this case we denote these elements by $Max(S)$. The universal group is generated by the set $Max(S)$ with the product: $s\ast t = u$, where $u$ is the unique maximal element above $st \in S$. 

Finally, $S$ is said to be \textit{strongly 0-F-inverse} if it is $0$-F-inverse and there is some group $\Gamma$ and a $0$-restricted idempotent pure homomorphism onto $\Gamma^{0}$; that is a map for which the preimage of $0$ is $0$ and the preimage of the identity contains only idempotents. In particular, this must factor though the universal group, so this amounts to asking if the map $\Phi$ is idempotent pure.
\end{definition}

\subsection{Constructing groupoids}\label{Sect:StoG}
In this section we outline how to build a groupoid given an inverse semigroup $S$ by considering the order structure of $S$ and the action of $S$ on itself. We briefly outline the construction below, but for a full account see \cite{MR2419901}. The main motivation of \cite{MR2419901} is to construct a groupoid that is associated with $S$ that captures the actions of $S$. To obtain it from $S$ itself we require $S$ to act on some space intrisic: namely we use the natural action of $S$ on its idempotent semilattice $E(S)$ by conjugation.

Let $D_{e}=\lbrace f \in E | f \leq e \rbrace$. For $ss^{*} \in E$ we can define a map $\rho_{s}(ss^{*})=s^{*}s$ that extends to $D_{ss^{*}}$ by the formula $\rho_{s}(e) = s^{*}es$. This defines a partial bijection on $E$ from $D_{ss^{*}}$ to $D_{s^{*}s}$. 

We now wish to construct a topological space from $E$ that will admit an action that is induced from the one we have developed above. To do this we consider a subspace of $\textbf{2}^{E}$ given by the functions $\phi$ such that $\phi(0)=0$ and $\phi(ef)=\phi(e)\phi(f)$.

We can topologize this as a subspace of $\textbf{2}^{E}$, where it is a closed subspace (hence compact Hausdorff), with a base of topology given by $\widehat{D}_{e}= \lbrace \phi \in \E | \phi(e)=1 \rbrace$. This space admits a dual action induced from the action of $S$ on $E$ given by the following pointwise equation for every $\phi \in \widehat{D}_{s^{*}s}$:
\begin{equation*}
\widehat{\rho}_{s}(\phi)(e)=\phi(\rho_{s}(e))=\phi(s^{*}es)
\end{equation*}
The use of $\widehat{D}_{e}$ to denote these sets is not a coincidence, as we have a map $D_{e} \rightarrow \widehat{D}_{e}$:
\begin{equation*}
e \mapsto \phi_{e}, \phi_{e}(f)=1 \mbox{ if } e \leq f \mbox{ and } 0 \mbox{ otherwise}.
\end{equation*}

We are now able to construct a groupoid from this data. Consider $\Omega:= \lbrace (s, \phi) | \phi \in D_{s^{*}s} \rbrace$ in the subspace topology induced from $S\times \E$. We then quotient this space by the relation:
\begin{equation*}
(s, \phi) \sim (t, \phi^{'}) \Leftrightarrow \phi=\phi^{'} \mbox{ and } (\exists e \in E) \mbox{ with } \phi \in D_{e} \mbox{ such that } es=et
\end{equation*}
We denote the quotient by $\G_{\E}$. It is possible to turns $\G_{\E}$ into a locally compact groupoid, using the set $\G_{\E}^{(2)}:=\lbrace ([s,x],[t,y]) | t(y)=x \rbrace$, with composition $[s,x][t,y]=[st,y]$, inverse $[s,x]^{-1}=[s^{*},s(x)]$ and slices constructed from the sets $\widehat{D}_{e}$ as a basis of topology. This groupoid is called the \textit{universal groupoid} associated to $S$, and it, in particular, shares the Hilbert space representation theory of $S$ to some extent (this is shown in Corollary 10.16 of \cite{MR2419901} for instance). We would like to remark also that this groupoid can be constructed using an approach based on filters and filter composition \cite{MR3077869}.

\subsection{Group valued cocycles}\label{sect:cocycle}
In this section we make precise the statements about cocycles, partial actions and universal enveloping spaces that we need in the sequel. We follow the references \cite{MR1900993,MR3231226}. 

\begin{definition}
Let $\rho: \G \rightarrow \Gamma$ be a cocycle. We say it is:
\begin{enumerate}
\item \textit{transverse} if the map $\Gamma \times \G \rightarrow \Gamma \times \G^{(0)}$, $(g, \gamma) \mapsto (g\rho(\gamma),s(\gamma))$ is open.
\item \textit{closed} if the map $\gamma \mapsto ((r(\gamma),\rho(\gamma),s(\gamma))$ is closed.
\item \textit{faithful} if the map $\gamma \mapsto ((r(\gamma),\rho(\gamma),s(\gamma))$ is injective.
\end{enumerate}
We call a cocycle $\rho$ with all these properties a \textit{(T,C,F)-cocycle}.
\end{definition}

\begin{remark}
If $\Gamma$ is a discrete group then to prove that a cocycle $\rho$ is transverse it is enough to check if the map $\gamma \mapsto (\rho(\gamma), s(\gamma)$ is open \cite{MR1900993}.
\end{remark}

From such a cocycle we can construct a locally compact Hausdorff $\Gamma$-space $\Omega$ by considering the space $\G^{(0)}\times \Gamma$, equipped with the product topology. We define $\sim$ on $\G^{(0)}\times \Gamma$ by $(x,g)\sim (y,h)$ if there exists $\gamma \in \G$ with $s(\gamma)=x$, $r(\gamma)=y$ and $\rho(\gamma)=h^{-1}g$. Denote the quotient of $\G^{(0)}\times \Gamma$ by $\sim$, equipped with the quotient topology, by $\Omega$. The closed condition on the cocycle makes this space Hausdorff \cite{MR1900993}. When this construction is applied to a suitable partial action of a group, the space $\Omega$ is called the globalisation of the partial action \cite{MR2041539}. 

The main result of Khoskham and Skandalis \cite{MR1900993} described in the introduction is more precisely given again below:

\begin{theorem}\label{Thm:1.8}
Let $\rho: \G \rightarrow \Gamma$ be a continuous, faithful, closed, transverse cocycle. Then the space $\Omega$ defined above is a locally compact Hausdorff space and there is a Morita equivalence of $\G$ with $\Omega\rtimes \Gamma$.\qed
\end{theorem}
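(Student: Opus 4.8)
The plan is to exhibit an explicit $(\G,\Omega\rtimes\Gamma)$-equivalence bimodule and to verify the standard axioms for a groupoid Morita equivalence (in the sense of Muhly--Renault--Williams). The natural candidate for the linking space is $Z:=\G^{(0)}\times\Gamma$ itself, equipped with the left anchor map $\pi_{\G}(x,g)=x$ and the right anchor map $\pi_{\Omega}(x,g)=[x,g]$, the class of $(x,g)$ in the quotient $\Omega$. I would make $Z$ a left $\G$-space via $\gamma\cdot(x,g)=(r(\gamma),\, g\rho(\gamma)^{-1})$ whenever $s(\gamma)=x$, and a right $\Gamma$-space via $(x,g)\cdot h=(x,gh)$; one checks immediately that these two actions commute and intertwine the anchor maps correctly.

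First I would dispose of the purely algebraic content, which is essentially bookkeeping. The right $\Gamma$-action is free and its orbits are exactly the fibres of $\pi_{\G}$, so $Z/\Gamma\cong\G^{(0)}$ set-theoretically; and by the very definition of the relation $\sim$, two points $(x,g)$ and $(y,h)$ lie in a common left $\G$-orbit precisely when $\rho(\gamma)=h^{-1}g$ for some $\gamma$ from $x$ to $y$, i.e.\ precisely when $(x,g)\sim(y,h)$. Hence $\G\backslash Z\cong\Omega$ as sets, so the two anchor maps induce the correct orbit spaces. Faithfulness of $\rho$ is exactly the statement that the left $\G$-action on $Z$ is free: if $\gamma\cdot(x,g)=(x,g)$ then $r(\gamma)=s(\gamma)=x$ and $\rho(\gamma)=e$, which forces $\gamma$ to be a unit once we know that $\gamma\mapsto(r(\gamma),\rho(\gamma),s(\gamma))$ is injective.

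The substance of the proof is topological, and this is where the three conditions on $\rho$ are consumed. I would establish that $\Omega$ is Hausdorff by identifying the graph of $\sim$ inside $Z\times Z$ with a reparametrisation of the image of the map $\gamma\mapsto(r(\gamma),\rho(\gamma),s(\gamma))$; the closed condition says exactly that this image is closed, so $\sim$ is a closed equivalence relation and $\Omega$ is Hausdorff. For local compactness and, more importantly, for the equivalence to be a \emph{topological} one rather than a mere set-theoretic bijection of orbit spaces, I would use the transverse condition to prove that the quotient map $Z\to\Omega$ is open; since $Z=\G^{(0)}\times\Gamma$ is locally compact and the quotient of a locally compact Hausdorff space by an open map is again locally compact, this simultaneously yields local compactness of $\Omega$ and openness of $\pi_{\Omega}$.

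The main obstacle, and the final step, is to verify that the left $\G$-action is \emph{principal}, i.e.\ that the orbit map
\begin{equation*}
\G\times_{s,\pi_{\G}}Z\longrightarrow Z\times_{\pi_{\Omega}}Z,\qquad (\gamma,z)\longmapsto(\gamma\cdot z,\,z)
\end{equation*}
is a homeomorphism onto the fibre product. Injectivity is faithfulness once more, and surjectivity is the fact that $\sim$-equivalent points share a $\G$-orbit. The genuinely hard point is continuity of the inverse: one must recover $\gamma$ continuously from the pair $(\gamma\cdot z,z)$. This is precisely what the combination of the transverse condition (openness of the orbit map) and the closed condition (properness, so that the continuous bijective orbit map is a homeomorphism) is designed to deliver, and disentangling these two roles for the possibly non-Hausdorff groupoid $\G$ is the crux of the argument. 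Once this is in place, $Z$ is a $(\G,\Omega\rtimes\Gamma)$-equivalence and the asserted Morita equivalence follows.
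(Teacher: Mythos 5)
A preliminary remark on the comparison itself: the paper offers no proof of this statement --- it is Khoshkam and Skandalis's theorem, quoted without proof and used as a black box --- so the only sensible comparison is with the original argument in \cite{MR1900993}. Your overall strategy coincides with that original: the equivalence is implemented by the linking space $Z=\G^{(0)}\times\Gamma$, the graph of $\sim$ is controlled through the map $\gamma\mapsto(r(\gamma),\rho(\gamma),s(\gamma))$, and the three hypotheses are consumed essentially where you say they are. But as written the proposal has one step that fails outright and one that is missing. The step that fails: your two actions on $Z$ do not commute. With $\gamma\cdot(x,g)=(r(\gamma),\,g\rho(\gamma)^{-1})$ and $(x,g)\cdot h=(x,gh)$ one computes
\begin{equation*}
(\gamma\cdot(x,g))\cdot h=(r(\gamma),\,g\rho(\gamma)^{-1}h),\qquad \gamma\cdot((x,g)\cdot h)=(r(\gamma),\,gh\rho(\gamma)^{-1}),
\end{equation*}
and these agree only when $h$ commutes with $\rho(\gamma)$. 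Since your left action is the one forced on you by the paper's relation $\sim$ (because $\rho(\gamma)=h^{-1}g$ is equivalent to $h=g\rho(\gamma)^{-1}$), the repair is to let $\Gamma$ act on the other side of the group coordinate, $(x,g)\cdot h=(x,h^{-1}g)$; then the actions commute and the induced action on $\Omega$ is $h\cdot[x,g]=[x,hg]$, which one checks is well defined on $\sim$-classes. This is a fixable bookkeeping error, but the verification you describe as immediate is false as stated.

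The missing step is the one you yourself flag as the crux: that the orbit map $\G\times_{s,\pi_{\G}}Z\to Z\times_{\pi_{\Omega}}Z$ is a homeomorphism (equivalently, that the left $\G$-action is topologically principal) and that the actions are proper. You assert that this ``is precisely what the combination of the transverse condition and the closed condition is designed to deliver,'' but no argument is given: nowhere do you convert openness of $(g,\gamma)\mapsto(g\rho(\gamma),s(\gamma))$ into openness of the orbit map, nor closedness of $\gamma\mapsto(r(\gamma),\rho(\gamma),s(\gamma))$ into properness, and that conversion is the entire analytic content of the theorem --- it occupies the main lemmas of \cite{MR1900993}. Relatedly, your Hausdorffness claim is slightly misstated: closedness of the graph of $\sim$ yields a Hausdorff quotient only in combination with openness of the quotient map $Z\to\Omega$, so the transverse condition is needed there as well, not only for local compactness. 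In sum: correct architecture and correct placement of the hypotheses, matching the strategy of the cited source, but with one computation that fails as written and a crux that is described rather than proved.
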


The important part of this construction was extended by Milan and Steinberg \cite{MR3231226} to cover the more general situation where the cocycles are inverse semigroup valued. Additionally, they show that every strongly 0-F-inverse monoid admits (T,C,F)-cocycle onto a group (see for example Corollary 6.17 \cite{MR3231226}). This result is what we will appeal to in later sections.

\subsection{The inverse monoid constructed from Example \ref{ex:1}}
Recall from Example \ref{ex:1} that to a subspace $X$ of $\Gamma$ it is possible to associate a collection $\mathcal{T}_{X}$ of partial translations of $X$. In this section we will understand the semigroup generated by $\mathcal{T}_{X}$.

\begin{lemma}\label{Lem:PTS}
Let $\Gamma$ be a finitely generated group and let $X \subset \Gamma$. Then inverse monoid, denoted $S$, generated by the collection of partial translations $\mathcal{T}_{X}$ is 0-F-inverse, with maximal element set $\mathcal{T}_{X}$. 
\end{lemma}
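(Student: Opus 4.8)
The plan is to produce an explicit normal form for the elements of $S$ and then read off both assertions almost directly from it. First I would observe that the collection of \emph{all} partial translations $t_{w}|_{A}$ (by which I mean the rule $x \mapsto xw^{-1}$ restricted to a subset $A \subseteq D_{w}$) is an inverse submonoid of $I(X)$: the composite of $t_{g}$ and $t_{h}$ sends $x \mapsto x(gh)^{-1}$ on the set of points for which both steps remain in $X$, so it is a restriction of $t_{gh}$, while $t_{g}^{-1} = t_{g^{-1}}$ is again a generator, and $\mathrm{id}_{X} = t_{1}$ (with $1$ the identity of $\Gamma$) supplies the unit. Since $\mathcal{T}_{X}$ consists of such translations, the inverse monoid $S$ it generates sits inside this submonoid. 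Hence every non-zero $s \in S$ has the form $s = t_{w(s)}|_{A}$ for a unique $w(s) \in \Gamma$ — uniqueness because the rule $x \mapsto xw^{-1}$ on a non-empty domain determines $w$ — and $s \leq t_{w(s)} \in \mathcal{T}_{X}$. This single fact, that every element lies below a full translation, is the backbone of everything that follows.

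Next I would identify the idempotents and deduce the 0-E-unitary property. The element $t_{w}|_{A}$ is idempotent exactly when it is a partial identity, i.e. when $xw^{-1} = x$ on $A$; for $A \neq \emptyset$ this forces $w = 1$, so the non-zero idempotents of $S$ are precisely the partial identities $\mathrm{id}_{A}$. For 0-E-unitarity, suppose a non-zero idempotent satisfies $\mathrm{id}_{A} \leq s = t_{w(s)}|_{B}$ with $A \neq \emptyset$. Then $s$ restricts to the identity on $A$, so $xw(s)^{-1} = x$ for $x \in A$, which forces $w(s) = 1$ and hence $s = \mathrm{id}_{B} \in E(S)$.

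The heart of the argument is understanding the universal group $U(S)$ and the map $\Phi$. Since $\Phi$ takes values in a group, any non-zero idempotent $e$ satisfies $\Phi(e)^{2} = \Phi(e)$ and so $\Phi(e) = 1$; writing $s = et$ for $s \leq t$ with $s \neq 0$ and invoking the defining relation $e \cdot t = et$ then gives $\Phi(s) = \Phi(e)\Phi(t) = \Phi(t)$. In particular $\Phi(s) = \Phi(t_{w(s)})$, so $\Phi$ is constant on order-classes. The key step — and the one I expect to be the main obstacle — is to rule out that passing to $U(S)$ collapses two distinct full translations $t_{w}, t_{w'}$, since that would leave a fibre with several maximal elements and break the uniqueness built into the 0-F-inverse product. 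I would overcome this by constructing a homomorphism $\psi \colon U(S) \to \Gamma$ with $\psi([t_{g}]) = g$; this is well defined precisely because $w(\cdot)$ is multiplicative, $w(st) = w(s)w(t)$ whenever $st \neq 0$, so each defining relation $[s][t] = [st]$ is respected. Then $\psi \circ \Phi (s) = w(s)$, whence $\Phi(t_{w}) = \Phi(t_{w'})$ implies $w = w'$.

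With this separation in hand the conclusion is immediate. Fixing a group element $\gamma \in U(S)$ and setting $w_{\gamma} = \psi(\gamma)$, every $s$ in the fibre $\Phi^{-1}(\gamma)$ has $w(s) = w_{\gamma}$, hence $s \leq t_{w_{\gamma}}$, while $t_{w_{\gamma}}$ itself lies in the fibre; thus $t_{w_{\gamma}}$ is the unique maximum of that fibre. This establishes that $S$ is 0-F-inverse, and it shows the maximal elements are exactly the non-zero full translations, that is $\mathrm{Max}(S) = \mathcal{T}_{X}$. The only point requiring care is the harmless identification of $\mathcal{T}_{X}$ with its non-zero members (those $t_{g}$ with $D_{g} \neq \emptyset$), since the empty translation is the zero of $S$ and plays no role among the maximal elements.
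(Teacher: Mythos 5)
Your proposal is correct, and its core follows the same route as the paper's own proof: the observation that $t_{g}t_{h}$ is a restriction of $t_{gh}$, so that every nonzero element of $S$ is a restriction $t_{w(s)}|_{A}$ of a unique full translation (your uniqueness-of-$w(s)$ argument is the same freeness/transitivity fact that the paper phrases as ``$\mathcal{T}_{X}$ partitions $X \times X$''), followed by the derivation of 0-E-unitarity from the fact that a translation fixing a point of a nonempty set must be the identity. Where you genuinely add something is the final step. The paper stops after establishing 0-E-unitarity and the existence of a unique element of $\mathcal{T}_{X}$ above each nonzero $s$, leaving implicit the passage to the definition as literally stated in the text, namely that the preimage under $\Phi \colon S \to U(S)^{0}$ of each group element has a (necessarily unique) maximal element. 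That passage is not automatic: two distinct maximal translations could a priori become identified in $U(S)$ --- this is exactly the gap between 0-E-unitary and \emph{strongly} 0-E-unitary behaviour --- and your homomorphism $\psi \colon U(S) \to \Gamma$, $[s] \mapsto w(s)$, well defined because $w(st)=w(s)w(t)$ whenever $st \neq 0$, is precisely what rules this collapse out. In the paper this separation argument is deferred: it appears in Proposition \ref{Prop:3.37} (and Corollary \ref{Cor:Trick}), where strong 0-F-inverseness is obtained by factoring through the Birget--Rhodes expansion $\Gamma^{Pr}$ to build an idempotent-pure map onto $\Gamma^{0}$. Your $\psi$ is a more direct, bare-hands version of the same trick, so your write-up both proves Lemma \ref{Lem:PTS} in its fibre-over-$U(S)$ formulation and effectively absorbs part of the content of Proposition \ref{Prop:3.37}; the trade-off is that the paper's staging isolates the purely order-theoretic statement first and reserves the group-valued map for where it is needed.
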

\begin{proof}
First we prove maximality of the translations. We prove that for any $s\in S \setminus \lbrace 0 \rbrace$ there exists a unique $t \in \mathcal{T}$ such that $s \leq t$. Observe that for each $g,h\in \Gamma$ the product $t_{g}t_{h}$ is contained, as a partial bijection, in $t_{gh}$. This implies that any product of elements of $\mathcal{T}$ is less than a unique $t \in \mathcal{T}$. As the action of $\Gamma$ on itself is transitive, the collection $\mathcal{T}$ partitions $X \times X$. From this we have that for any pair $t_{i},t_{j}\in \mathcal{T}$ $et_{i}=et_{j} \Leftrightarrow t_{i}=t_{j}$. 

Now we prove that $S$ is $0$-E-unitary. Let $e\in E(S)\setminus \lbrace 0 \rbrace$ and $s\in S\setminus \lbrace 0 \rbrace$. As any product of translations is contained in a unique translation, it is enough to consider the case that $s \in \mathcal{T}$. It follows that  the condition $e \leq s$ implies that $s$ fixes some elements of $X$. However, $\mathcal{T}$ partitions $X \times X$ and so $s \leq id_{X}$. We assumed that $s$ was maximal however, so $s = id_{X}$. Now any general word in $\mathcal{T}$ satisfies: $e \leq s \implies s \leq id_{X}$, hence $s$ is idempotent.  
\end{proof}

The construction from Example \ref{ex:1} is a special example of a \textit{partial action} of a group.

\begin{definition}
Let $\Gamma$ be a discrete group and let $X$ be a topological space. Then $\Gamma$ acts partially on $X$ by partial homeomorphisms if there is a map: $\theta: \Gamma \rightarrow I(X)$ that satisfies:
\begin{enumerate}
\item $\theta(e_{\Gamma})=Id_{X}$,
\item $\theta(g)\theta(h)\leq \theta(gh)$,
\item The domains and ranges of each $\theta(g)$ are open, and $\theta(g)$ is a homeomorphism between them.
\end{enumerate}
\end{definition}

Maps that satisfy condition (2) are called \textit{dual prehomomorphisms}, and such maps can be classified in terms of a universal group \cite{MR745358}:

\begin{definition}
Let $\Gamma$ be a discrete group. Consider the collection of pairs: $(X,g)$ for $\lbrace 1,g\rbrace \subset X$, where $X$ is a finite subset of $\Gamma$. The set of such $(X,g)$ is then equipped with a product and inverse:
\begin{equation*}
(X,g)(Y,h) = (X\cup gY,gh)\mbox{ , } (X,g)^{-1}=(g^{-1}X,g^{-1})
\end{equation*}
This inverse monoid is called the \textit{Birget-Rhodes (prefix) expansion of $\Gamma$}. This has maximal group homomorphic image $\Gamma$, and it has the universal property that it is the largest such inverse monoid. We denote this by $\Gamma^{Pr}$. The partial order on $\Gamma^{Pr}$ can be described by reverse inclusion, induced from reverse inclusion on finite subsets of $\Gamma$. It is F-inverse, with maximal elements: $\lbrace(\lbrace 1,g \rbrace, g):g \in \Gamma \rbrace$.
\end{definition}

This classifying monoid can be used to prove that the inverse semigroup generated by the collection $\mathcal{T}$ of Example \ref{ex:1} is strongly 0-F-inverse.

\begin{proposition}\label{Prop:3.37}
Let $S = \langle \theta_{g} | g \in \Gamma \rangle$, where $\theta: \Gamma \rightarrow S$ is a partial action. If $S$ is 0-F-inverse with $Max(S) = \lbrace \theta_{g} | g \in \Gamma \rbrace$. If for each $g \not e$ in $\Gamma$ such that $\theta_{g}$ is not zero we have $\theta_{g}$ is also not idempotent then $S$ is strongly 0-F-inverse.
\end{proposition}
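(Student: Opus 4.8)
The plan is to produce, directly, the structure map demanded by the definition of strongly $0$-F-inverse. By the discussion preceding that definition, $S$ is strongly $0$-F-inverse as soon as the canonical prehomomorphism $\Phi$ is idempotent pure; concretely it suffices to exhibit a $0$-restricted, idempotent pure (pre)homomorphism $\psi\colon S\to\Gamma^{0}$ that grades $S$ by its maximal group image. I would set $\psi(0)=0$ and, for $s\neq 0$, $\psi(s)=g$ where $\theta_{g}=Max(s)$ is the unique maximal element lying above $s$ — this exists because $S$ is $0$-F-inverse with $Max(S)=\{\theta_{g}\mid g\in\Gamma\}$. Such a $\psi$ is $0$-restricted by construction, so the two substantive tasks are to check that $\psi$ is well defined and that it is idempotent pure.

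The first task — well-definedness, i.e.\ injectivity of $g\mapsto\theta_{g}$ on the nonzero labels — is where the non-idempotency hypothesis does its work, and I expect it to be the crux. Suppose $\theta_{g}=\theta_{g'}\neq 0$ with $g\neq g'$, and consider $d:=\theta_{g'}^{*}\theta_{g}$. On one hand $d=\theta_{g'}^{*}\theta_{g'}$ is the domain idempotent of $\theta_{g'}$, which is nonzero since $\theta_{g'}\neq 0$; on the other hand, using $\theta_{g'}^{*}=\theta_{(g')^{-1}}$ and the partial-action inequality, $d=\theta_{(g')^{-1}}\theta_{g}\leq\theta_{(g')^{-1}g}$. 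Thus $\theta_{(g')^{-1}g}$ has a nonzero idempotent below it, so by $0$-E-unitarity of $S$ (which holds as $S$ is $0$-F-inverse) we get $\theta_{(g')^{-1}g}\in E(S)$, and it is nonzero since $d\leq\theta_{(g')^{-1}g}$. As $(g')^{-1}g\neq e$, this contradicts the hypothesis that every nonzero $\theta_{k}$ with $k\neq e$ is non-idempotent. Hence $g=g'$ and $\psi$ is a well-defined map into $\Gamma^{0}$.

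Granting well-definedness, the remaining verifications are short. For the (pre)homomorphism property: if $s\leq\theta_{g}$, $t\leq\theta_{h}$ and $st\neq 0$, then $st\leq\theta_{g}\theta_{h}\leq\theta_{gh}$, so $Max(st)=\theta_{gh}$ and $\psi(st)=gh=\psi(s)\psi(t)$, whereas products vanishing in $S$ are absorbed by the zero of $\Gamma^{0}$ — exactly the behaviour of a prehomomorphism. For idempotent purity: $\psi(s)=e_{\Gamma}$ forces $Max(s)=\theta_{e}=Id_{X}$ (injectivity ensures the label $e$ attaches to no maximal element other than $Id_{X}$), so $s\leq Id_{X}$ and hence $s=e'\,Id_{X}=e'\in E(S)$; conversely every nonzero idempotent lies below $Id_{X}=\theta_{e}$ and so maps to $e_{\Gamma}$, giving $\psi^{-1}(e_{\Gamma})=E(S)\setminus\{0\}$. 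Therefore $\psi$ is the required $0$-restricted, idempotent pure homomorphism onto $\Gamma^{0}$, and $S$ is strongly $0$-F-inverse.

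In summary, the whole argument turns on the injectivity step: it is the unique place where the order-theoretic $0$-E-unitary property must be married to the partial-action structure, and it is precisely what the hypothesis on non-idempotency of the non-identity labels is tailored to supply. Once the $\Gamma$-labelling of $Max(S)$ is unambiguous, both the prehomomorphism identity and idempotent purity reduce immediately to the two defining features of a partial action, namely $\theta_{g}\theta_{h}\leq\theta_{gh}$ and $\theta_{e}=Id_{X}$.
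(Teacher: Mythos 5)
Your proof is correct, and the map you define is in fact literally the paper's map --- both send a nonzero $s$ to the group label of the unique maximal element above it --- but your verification takes a more elementary, self-contained route. The paper realises the map by factoring the partial action through the Birget--Rhodes prefix expansion $\Gamma^{pr}$, setting $\Phi(s)=\sigma(m(\overline{\theta}^{-1}(m(s))))$, and then imports idempotent purity from the known idempotent pure map $\sigma\colon\Gamma^{pr}\to\Gamma^{0}$ together with $0$-E-unitarity of $\Gamma^{pr}$; crucially, the well-definedness of the labelling, i.e.\ that $\theta_{g}=\theta_{h}\neq 0$ forces $g=h$, is only \emph{asserted} there. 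Your second paragraph proves exactly this assertion ($\theta_{g'}^{*}\theta_{g}$ is a nonzero idempotent below $\theta_{(g')^{-1}g}$, so $0$-E-unitarity plus the non-idempotency hypothesis force $(g')^{-1}g=e$), so on the crux your write-up is the more complete one; what you give up is the conceptual packaging, since the paper's detour through $\Gamma^{pr}$ ties the construction to the general theory of dual prehomomorphisms and classifying monoids that the paper uses elsewhere (e.g.\ in the appeal to Milan--Steinberg). One caveat: you use $\theta_{g'}^{*}=\theta_{(g')^{-1}}$, which is not among the three axioms in the paper's definition of a partial action, though it is part of the standard definition in the cited literature and is equally needed by the paper's own argument (a homomorphism $\overline{\theta}\colon\Gamma^{pr}\to S$ must preserve inverses, which forces $\theta(g^{-1})=\theta(g)^{*}$). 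If you wish to avoid assuming it, it follows from your hypotheses: $\theta_{g}^{*}$ is maximal (the order is preserved by inversion), hence equals some $\theta_{h}$, and then $\theta_{h}\theta_{g}=\theta_{g}^{*}\theta_{g}$ is a nonzero idempotent below $\theta_{hg}$, so your own injectivity argument gives $h=g^{-1}$.
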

\begin{proof}
We build a map $\Phi$ back onto $\Gamma^{0}$. Let $m: S\setminus \lbrace 0 \rbrace \rightarrow Max(S)$ be the map that sends each non-zero $s$ to the maximal element $m(s)$ above $s$ and consider the following diagram:
\begin{equation*}
\xymatrix{
\Gamma\ar@{->}[r]^{\theta}\ar@{->}[dr]^{}  & S\ar@{->}[dr]^{\Phi}  & \\
  & \Gamma^{pr} \ar@{->}[r]^{\sigma}\ar@{->}[u]^{\overline{\theta}}  & \Gamma^{0}
}
\end{equation*}
where $\Gamma^{pr}$ is the prefix expansion of $\Gamma$. Define the map $\Phi:S \rightarrow \Gamma^{0}$ by:
\begin{equation*}
\Phi(s)=\sigma ( m ( \overline{\theta}^{-1} (m(s)))), \Phi(0)=0
\end{equation*}
For each maximal element the preimage under $\overline{\theta}$ is well defined as the map $\theta_{g}$ has the property that $\theta_{g}=\theta_{h} \Rightarrow g=h$ precisely when $\theta_{g} \not = 0 \in S$. Given the preimage is a subset of the F-inverse monoid $\Gamma^{pr}$ we know that the maximal element in the preimage is the element $(\lbrace 1,g \rbrace,g)$ for each $g \in \Gamma$, from where we can conclude that the map $\sigma$ takes this onto $g \in \Gamma$.

We now prove it is a prehomomorphism. Let $\theta_{g},\theta_{h} \in S$, then:
\begin{eqnarray*}
\Phi(\theta_{g})=\sigma ( m(\overline{\theta}^{-1}(\theta_{g}))) = \sigma ( (\lbrace 1,g \rbrace, g) )= g\\
\Phi(\theta_{h})=\sigma ( m(\overline{\theta}^{-1}(\theta_{h}))) = \sigma ( (\lbrace 1,h \rbrace, h) )= h\\
\Phi(\theta_{gh})=\sigma ( m(\overline{\theta}^{-1}(\theta_{gh}))) = \sigma ( (\lbrace 1,gh \rbrace, gh) )= gh
\end{eqnarray*}
Hence whenever $\theta_{g},\theta_{h}$ and $\theta_{gh}$ are defined we know that $\Phi(\theta_{g}\theta_{h})=\Phi(\theta_{g})\Phi(\theta_{h})$. They fail to be defined if:
\begin{enumerate}
\item If $\theta_{gh} = 0$ in $S$ but $\theta_{g}$ and $\theta_{h} \not = 0$ in $S$, then $0=\Phi(\theta_{g}\theta_{h})\leq \Phi(\theta_{g})\Phi(\theta_{h})$

\item If (without loss of generality) $\theta_{g}=0$ then $0=\Phi(0.\theta_{h})= 0.\Phi(\theta_{h})=0$
\end{enumerate}
So prove that the inverse monoid $S$ is strongly 0-F-inverse it is enough to prove then that the map $\Phi$ is idempotent pure, and without loss of generality it is enough to consider maps of only the maximal elements - as the dual prehomomorphism property implies that in studying any word that is non-zero we will be less than some $\theta_{g}$ for some $g \in \Gamma$.

So consider the map $\Phi$ applied to a $\theta_{g}$:
\begin{equation*}
\Phi(\theta_{g})=\sigma ( m(\overline{\theta}^{-1}(\theta_{g}))) = \sigma ( (\lbrace 1,g \rbrace, g) )= g
\end{equation*}
Now assume that $\Phi(\theta_{g}) = e_{\Gamma}$. Then it follows that $\sigma (m (\overline{\theta}^{-1}(\theta_{g})))=e_{\Gamma}$. As $\sigma$ is idempotent pure, it follows then that $m(\overline{\theta}^{-1}(\theta_{g}))=1$, hence for any preimage $t\in \theta^{-1}(\theta_{g})$ we know that $t \leq 1$, and by the property of being 0-E-unitary it then follows that $t \in E(\Gamma^{pr})$. Mapping this back onto $\theta_{g}$ we can conclude that $\theta_{g}$ is idempotent, but by assumption this only occurs if $g = e$.\end{proof}

This has an immediate consequence for the inverse monoid generated by the construction from Example \ref{ex:1}.

\begin{corollary}\label{Cor:Trick}
Let $\Gamma$ be a finitely generated discrete group and let $X$ be coarsely embedded into $\Gamma$. Then the collection $\mathcal{T}_{X}$ arising from the construction in Example \ref{ex:1} generates a strongly $0$-F-inverse monoid. 
\end{corollary}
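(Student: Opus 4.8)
The plan is to realise the collection $\mathcal{T}_{X}$ as a partial action of $\Gamma$ and then invoke Proposition \ref{Prop:3.37}, so that essentially all of the structural work has already been done in Lemma \ref{Lem:PTS}. First I would observe that the assignment $g \mapsto t_{g}$, where $t_{g}: D_{g} \rightarrow D_{g^{-1}}$ is the restricted right translation $x \mapsto xg^{-1}$ of Example \ref{ex:1}, is a partial action of $\Gamma$ on $X$ in the sense of the definition given above. Indeed $t_{e_{\Gamma}} = \mathrm{id}_{X}$; the domains and ranges are subsets of the discrete space $X$, hence automatically open, and $t_{g}$ restricts to a bijection between them; and the containment $t_{g}t_{h} \leq t_{gh}$ recorded in the proof of Lemma \ref{Lem:PTS} is exactly the dual prehomomorphism property $\theta(g)\theta(h) \leq \theta(gh)$.

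With this identification $\theta_{g} = t_{g}$, Lemma \ref{Lem:PTS} supplies two of the three hypotheses of Proposition \ref{Prop:3.37}: the inverse monoid $S = \langle \mathcal{T}_{X} \rangle$ is $0$-F-inverse and its maximal element set is precisely $\lbrace \theta_{g} : g \in \Gamma \rbrace = \mathcal{T}_{X}$. It therefore remains only to verify the non-idempotency hypothesis, namely that for every $g \neq e_{\Gamma}$ with $t_{g} \neq 0$ the element $t_{g}$ is not idempotent.

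For this step I would argue directly from the description of idempotents in $I(X)$: an element of $I(X)$ is idempotent exactly when it is a partial identity, that is a restriction of $\mathrm{id}_{X}$ (this is the remark made after the definition of $0$-E-unitary). So if $t_{g}$ were idempotent then $t_{g}(x) = x$ for every $x$ in its domain $D_{g}$, which by the definition of $t_{g}$ reads $xg^{-1} = x$. Since $t_{g} \neq 0$ the domain $D_{g}$ is non-empty, so cancelling on the left in the group $\Gamma$ yields $g^{-1} = e_{\Gamma}$, hence $g = e_{\Gamma}$. Contrapositively, any $g \neq e_{\Gamma}$ with non-empty domain gives a non-idempotent $t_{g}$, which is the required hypothesis.

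Having confirmed all three hypotheses, Proposition \ref{Prop:3.37} then yields immediately that $S$ is strongly $0$-F-inverse, completing the argument. I do not anticipate a serious obstacle here: the proof is essentially a matching of hypotheses, and the only genuinely new input beyond Lemma \ref{Lem:PTS} is the one-line cancellation computation showing that a non-trivial right translation can never fix a point of $X$. The one point that merits a little care is the verification that the abstract inverse-semigroup notion of idempotent coincides, for these concrete partial bijections, with being a partial identity, since it is this coincidence that makes the cancellation computation decisive.
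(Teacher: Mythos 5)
Your proposal is correct and follows essentially the same route as the paper: both reduce the statement to Lemma \ref{Lem:PTS} and Proposition \ref{Prop:3.37}, with the only new input being that no non-zero $t_{g}$, $g \neq e_{\Gamma}$, is idempotent. Your explicit cancellation computation $xg^{-1}=x \Rightarrow g=e_{\Gamma}$ is just the unpacked form of the paper's observation that the right action of $\Gamma$ on itself is free and hence restricts to a free partial action on $X$.
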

\begin{proof}
We remark that as $\Gamma$ acts on itself freely and so restricts to a free partial action of $\Gamma$ on $X$. The truncation has the property that no $t_{g}$ that is non-zero will be idempotent. The result now follows from Lemma \ref{Lem:PTS} and Proposition \ref{Prop:3.37}.
\end{proof}

\subsection{How these collections connect to $G(X)$}
Suppose that $X$ is a uniformly discrete metric space of bounded geometry satisfying the conditions of Corollary \ref{Cor:Trick}. Then from the collection $\mathcal{T}_{X}$ of partial translations that generate the metric coarse structure, it is possible to construct a second countable groupoid $G(\mathcal{T}_{X})$ that will decompose the coarse groupoid $G(X)$. Consider the inverse monoid $S$ generated by $\mathcal{T}_{X}$ and the representation, denoted $\pi_{X}$ in $I(X)$. We wish to take this representation into account when constructing the universal groupoid.

Consider the representation of the inverse monoid $S$ on $\ell^{2}(X)$ that naturally arises from $\pi_{X}$. We can complete the semigroup ring in this representation to get an algebra $C^{*}_{\pi_{X}}S$, which has a unital commutative subalgebra $C^{*}_{\pi_{X}}E$. By Proposition 10.6 \cite{MR2419901} we will get a closed subspace $\widehat{X}$ of unit space $\E$ of $\G_{\E}$ by considering the spectrum of $C^{*}_{\pi_{X}}E$. This will be invariant under the action of $S$, so we can reduce $\G_{\E}$ to this (see Section 10 of \cite{MR2419901} for all the details of this construction). We denote this groupoid by $G(\mathcal{T}_{X})$.

Theorem 10.16 \cite{MR2419901} implies that we have the isomorphism: $C^{*}_{r}(\G(\mathcal{T}_{X})) \cong C^{*}_{\pi_{X}}(S)\cong C^{*}\mathcal{T}_{X}$, where $C^{*}\mathcal{T}_{X}$ is the $C^{*}$-algebra generated by the maps $t \in \mathcal{T}_{X}$ that viewed as operators on $\ell^{2}X$ via $\pi_{X}$.

As the following claim will illustrate, the groupoid $G(\mathcal{T}_{X})$ is well controlled by the elements of $\mathcal{T}_{X}$.
\begin{claim}\label{Claim:C1}
Let $S$ be 0-F-inverse. Then every element $[s,\phi] \in \G_{\E}$ has a representative $[t,\phi]$ where $t$ is a maximal element.
\end{claim}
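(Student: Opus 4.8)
The plan is to take the semigroup coordinate $s$ of a germ $[s,\phi]$ and push it up to the unique maximal element of $S$ lying above it, then to verify that this replacement leaves the germ unchanged. First I would observe that $s$ cannot be $0$: any representative $(s,\phi)$ has $\phi\in\widehat{D}_{s^{*}s}$, and since every character satisfies $\phi(0)=0$ the set $\widehat{D}_{0}$ is empty, so $s^{*}s\neq 0$ and in particular $s\neq 0$. I would then invoke the $0$-F-inverse hypothesis directly: the preimage under $\Phi$ in $U(S)$ of the group element determined by $s$ contains a maximal element $t\in Max(S)$, and by definition of that preimage $s\leq t$. This $t$ is the candidate maximal representative.

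It then remains to carry out two verifications. The first is that $(t,\phi)$ is a legitimate point of $\Omega$, i.e. that $\phi\in\widehat{D}_{t^{*}t}$. This follows from $s\leq t$, which gives $s^{*}s\leq t^{*}t$, together with the elementary fact that $e\leq f$ forces $\widehat{D}_{e}\subseteq\widehat{D}_{f}$: indeed $\phi(e)=\phi(ef)=\phi(e)\phi(f)$, so $\phi(e)=1$ implies $\phi(f)=1$. The second is that $(s,\phi)\sim(t,\phi)$ under the defining relation of $\G_{\E}$. Here I would take the source idempotent $e=s^{*}s$, which satisfies $\phi\in\widehat{D}_{e}$ by construction, and note that because $s\leq t$ the element $s$ is precisely the restriction of $t$ to this idempotent. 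Concretely the restriction identities $s\,s^{*}s=s=t\,s^{*}s$ hold (the first is the defining relation $ss^{*}s=s$, the second is the standard consequence of $s\leq t$), and these are exactly what the equivalence relation demands of $(s,\phi)$ and $(t,\phi)$ for the choice $e=s^{*}s$. Combining the two verifications yields $[s,\phi]=[t,\phi]$ with $t$ maximal, as claimed.

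The whole weight of the argument rests on the $0$-F-inverse hypothesis, which is what supplies a canonical maximal element above $s$; everything else is the bookkeeping that the natural partial order of $S$ is compatible with the germ relation. I expect the only genuinely delicate point to be exactly that bookkeeping: one must identify the source idempotent of $(s,\phi)$ correctly as $s^{*}s$ and then match the side on which the restricting idempotent acts to the composition convention used in writing the relation $es=et$, so that the order identity $t\,s^{*}s=s$ coming from $s\leq t$ is genuinely the witness the relation requires. Once that convention is pinned down there are no remaining obstacles.
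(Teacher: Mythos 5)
Your proposal is correct and follows essentially the same route as the paper's own proof: pass to the unique maximal element $t$ above $s$, use $s^{*}s\leq t^{*}t$ together with upward closure of the character/filter to see that $(t,\phi)$ is a valid pair, and witness the germ equivalence with the idempotent $e=s^{*}s$ via $s(s^{*}s)=t(s^{*}s)$. The only differences are that you make explicit two points the paper leaves tacit (that $s\neq 0$, so a maximal element exists, and the character computation behind upward closure), which is fine.
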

\begin{proof}
Take $t=t_{s}$ the unique maximal element above $s$. Then we know 
\begin{equation*}
s = t_{s}s^{*}s \mbox{ and } s^{*}s \leq t_{s}^{*}t_{s}
\end{equation*} 
The second equation tells us that $t_{s}^{*}t_{s} \in F_{\phi}$ as filters are upwardly closed, thus $(t_{s},\phi)$ is a valid element. Now to see $[t_{s},\phi]=[s,\phi]$ we need to find an $e \in E$ such that $e \in F_{\phi}$ and $se=t_{s}e$. Take $e=s^{*}s$ and then use the first equation to see that $s(s^{*}s)=t_{s}(s^{*}s)$.
\end{proof}

Thus, the translations are the only elements of $S$ that need to be concerned with when working with $\G(\mathcal{T}_{X})$. Finally, the groupoid $\G(\mathcal{T}_{X})$ acts freely on $\beta X$ and so we can generate now the coarse groupoid using this data using Lemma 3.3b) \cite{MR1905840}:

\begin{lemma}\label{Lem:CG}
The coarse groupoid $G(X) \cong \beta X \rtimes \G(\mathcal{T}_{X}).$\qed
\end{lemma}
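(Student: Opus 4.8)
The plan is to realise both sides as groupoids of germs of the partial translations in $\mathcal{T}_{X}$ and then to appeal to Lemma 3.3b) of \cite{MR1905840}. First I would record the standard description of $G(X)$. Since $X\subseteq\Gamma$ carries the metric induced from the Cayley graph, a pair $(x,y)$ satisfies $d(x,y)\leq R$ exactly when $y=xg^{-1}$ for some $g$ with $|g|\leq R$, so each controlled set $E_{R}$ is the union of the graphs $\Delta_{g}=\{(x,xg^{-1}):x\in D_{g}\}$ of the maps $t_{g}$. As balls in a finitely generated group are finite, this is a finite union, so $G(X)=\overline{\bigcup_{g}\Delta_{g}}\subseteq\beta(X\times X)$ is covered by the compact open bisections $\overline{\Delta_{g}}$; an arrow of $G(X)$ over $\omega\in\beta X$ is the germ of some $t_{g}$ at $\omega$, and $t_{g},t_{h}$ define the same arrow precisely when they agree on a set belonging to $\omega$. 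This is exactly the partial-translation picture underlying Lemma 3.3b).

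Next I would match this with $\G(\mathcal{T}_{X})$. By Claim \ref{Claim:C1} every element of $\G(\mathcal{T}_{X})$ has the form $[t_{g},\phi]$ with $t_{g}\in Max(S)=\mathcal{T}_{X}$, and the defining relation $[t_{g},\phi]=[t_{h},\phi]$ holds iff there is an idempotent $e$ with $\phi(e)=1$ and $et_{g}=et_{h}$, that is, iff $t_{g}$ and $t_{h}$ agree on the domain $D_{e}$ seen by $\phi$. Writing $p:\beta X\to\X$ for the continuous surjection dual to the inclusion $C^{*}_{\pi_{X}}E\hookrightarrow\ell^{\infty}(X)=C(\beta X)$, this relation depends only on $\phi=p(\omega)$ and translates verbatim into the relation ``$t_{g},t_{h}$ agree on a set in $\omega$'' of the previous paragraph. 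Thus the arrows of the transformation groupoid $\beta X\rtimes\G(\mathcal{T}_{X})$, namely the pairs $(\omega,[t_{g},p(\omega)])$ with $\omega\in\overline{D_{g}}$, are in canonical bijection with the germs of $\mathcal{T}_{X}$ over $\beta X$. I would accordingly define $\Psi(\omega,[t_{g},p(\omega)])$ to be the germ of $t_{g}$ at $\omega$, i.e. the controlled ultrafilter on $X\times X$ generated by the sets $\{(x,xg^{-1}):x\in A\}$ for $A\in\omega$, $A\subseteq D_{g}$.

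It then remains to check that $\Psi$ is an isomorphism of topological groupoids. Well-definedness and injectivity follow from the matching of germ relations just described; surjectivity follows because any controlled ultrafilter contains some $E_{R}$, hence some $\Delta_{g}$, and so equals the germ of $t_{g}$ at its source. That $\Psi$ is a homomorphism uses $t_{g}t_{h}\leq t_{gh}$ from Lemma \ref{Lem:PTS}, so that composing germs of $t_{g}$ and $t_{h}$ yields the germ of $t_{gh}$, matching the product $[t_{g},\cdot][t_{h},\cdot]=[t_{g}t_{h},\cdot]$; freeness of the action on $\beta X$ ensures $\beta X\rtimes\G(\mathcal{T}_{X})$ has no isotropy beyond that of $G(X)$. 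Finally $\Psi$ carries the basic open set $\{(\omega,[t_{g},p(\omega)]):\omega\in\overline{A}\}$ onto the compact open bisection $\overline{\{(x,xg^{-1}):x\in A\}}$, and these generate both topologies, so $\Psi$ is a homeomorphism. The main obstacle is precisely this last identification: one must verify that the quotient topology of the universal-groupoid construction of $\G(\mathcal{T}_{X})$, transported through the anchor $p$, reproduces the subspace topology $G(X)$ inherits from $\beta(X\times X)$ — equivalently, that $p$ together with the action it carries accounts exactly for the gap between $\X$ and $\beta X$. Once this is in place the statement is exactly Lemma 3.3b) of \cite{MR1905840}, read as presenting the coarse groupoid as the transformation groupoid of the germ groupoid of a generating family of partial translations.
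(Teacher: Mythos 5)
Your proposal is correct and takes essentially the same route as the paper: the paper offers no written argument beyond citing Lemma 3.3b) of \cite{MR1905840} together with Claim \ref{Claim:C1} (every element of $\G(\mathcal{T}_{X})$ has a maximal representative $t_{g}$) and the freeness of the action on $\beta X$, and your explicit germ-by-germ isomorphism is precisely the verification that this citation encapsulates. The details you supply — the partition of each controlled set into the graphs $\Delta_{g}$, the matching of the two germ relations through the anchor map $p$, and the correspondence of bases of compact open bisections — are sound, with freeness of the partial action doing the work of collapsing both relations to ``same $g$, same $\omega$''.
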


\section{Applying these techniques to Monster groups}

In this section we connect the inverse semigroup picture developed in the previous section to Gromov monster groups.

\subsection{Final groupoid preliminaries}
\begin{definition}
Let $\G$ be an locally compact \'etale groupoid and let $F$ be a subset of the unit space $\G^{(0)}$. We say that $F$ is \textit{saturated} if for all $\gamma \in \G$ with $s(\gamma) \in F$ we also have $r(\gamma)\in F$. 
\end{definition}

Our aim is now to show that if we have a \'etale groupoid $\G$ that has a $(T,C,F)$ map to a group, we can get $(T,C,F)$ maps on reductions associated to certain saturated sets. We do this with the following two Lemmas.

\begin{lemma}\label{Lem:Cut}
Let $\G$ be an \'etale locally compact Hausdorff groupoid with a (T,C,F)-cocycle $\rho$ to $\Gamma$. Then relation $\sim$ on $\G^{(0)} \times \Gamma$ preserves saturated subsets of $\G^{(0)}$
\end{lemma}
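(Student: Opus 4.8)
The plan is to unwind the definition of the relation $\sim$ and read the conclusion off directly from the saturation hypothesis, using only that $\G$ is a groupoid and that each instance of $\sim$ is witnessed by an arrow of $\G$. I would first make the statement precise: for a saturated $F \subseteq \G^{(0)}$, the claim is that no $\sim$-class meets both $F \times \Gamma$ and $(\G^{(0)} \setminus F) \times \Gamma$; equivalently, membership of the first coordinate in $F$ is a $\sim$-invariant, so that $\sim$ restricts to an equivalence relation on $F \times \Gamma$ and the quotient $\Omega$ decomposes along $F$.

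First I would fix a saturated $F$ and a pair $(x,g) \sim (y,h)$ with $x \in F$. By definition of $\sim$ there is an arrow $\gamma \in \G$ with $s(\gamma) = x$, $r(\gamma) = y$ and $\rho(\gamma) = h^{-1}g$. Since $s(\gamma) = x \in F$ and $F$ is saturated, the defining property of saturation gives $r(\gamma) = y \in F$. Thus $(x,g) \sim (y,h)$ together with $x \in F$ forces $y \in F$.

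For the reverse implication I would exploit that $\sim$ is symmetric, witnessed concretely by the inverse arrow. If $(x,g) \sim (y,h)$ with $y \in F$, then $\gamma^{-1}$ satisfies $s(\gamma^{-1}) = r(\gamma) = y$, $r(\gamma^{-1}) = s(\gamma) = x$ and $\rho(\gamma^{-1}) = \rho(\gamma)^{-1} = g^{-1}h$, so the same saturation argument applied to $\gamma^{-1}$ yields $x \in F$. The two directions together show that the first coordinate lying in $F$ is preserved by $\sim$, which is the asserted statement.

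The only point requiring care is the formulation rather than any analytic obstacle: the argument uses none of the transverse, closed, or faithful conditions on $\rho$, which appear in the hypothesis only because they are the standing assumptions for the surrounding constructions. What is worth recording explicitly is that saturation must be invoked for both $\gamma$ and $\gamma^{-1}$, i.e. a saturated set is automatically \emph{invariant} under the whole groupoid, so that the induced partition of $\G^{(0)} \times \Gamma$ passes to the quotient $\Omega$.
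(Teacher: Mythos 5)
Your proof is correct and takes essentially the same approach as the paper's: unwind the definition of $\sim$ to obtain a witnessing arrow $\gamma \in \G$ and apply saturation to force $r(\gamma)$ into the saturated set, the paper phrasing this as a one-line contradiction with $x \in U$, $y \in U^{c}$. Your added remarks --- that the symmetric direction is handled by the inverse arrow $\gamma^{-1}$, and that none of the transverse, closed, or faithful hypotheses are actually used --- are accurate but do not change the substance of the argument.
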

\begin{proof}
Let $U$ be a saturated subset of $\G^{(0)}$ and let $x \in U$, $y \in U^{c}$. Assume for a contradiction that $(x,g) \sim (y,h)$ in $\G^{(0)} \times \Gamma$. Then there exists a $\gamma$ $\in \G$ such that $s(\gamma)=x$, $r(\gamma)=y$ and $\rho(\gamma)=g^{-1}h$, but as $U$ is saturated no such $\gamma$ exists. 
\end{proof}

\begin{lemma}\label{Lem:Top}
Let $\G$ be an \'etale locally compact Hausdorff groupoid and let $F$ be a closed saturated subset of $\G^{(0)}$. If $\G$ admits a (T,C,F)-cocycle $\rho$ onto a discrete group $\Gamma$ then so do $\G_{F}$ and $\G_{F^{c}}$. 
\end{lemma}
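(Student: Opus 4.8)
The plan is to restrict the given cocycle $\rho$ to the two reductions and verify the three defining properties for each. First I would record the elementary structural facts: since $F$ is closed and saturated, its complement $F^{c}$ is open and also saturated (apply saturation to $\gamma^{-1}$: if $s(\gamma)\in F^{c}$ but $r(\gamma)\in F$, then $s(\gamma^{-1})=r(\gamma)\in F$ forces $r(\gamma^{-1})=s(\gamma)\in F$, a contradiction). By saturation the reductions simplify to $\G_{F}=s^{-1}(F)=r^{-1}(F)$, which is closed in $\G$, and $\G_{F^{c}}=s^{-1}(F^{c})=r^{-1}(F^{c})$, which is open in $\G$. As $\G$ is étale these are again étale locally compact Hausdorff subgroupoids with unit spaces $F$ and $F^{c}$, so the restrictions $\rho_{F}:=\rho|_{\G_{F}}$ and $\rho_{F^{c}}:=\rho|_{\G_{F^{c}}}$ are continuous homomorphisms into $\Gamma$, and it remains to check that each is transverse, closed and faithful.

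Faithfulness is immediate, being inherited from injectivity of $\gamma\mapsto(r(\gamma),\rho(\gamma),s(\gamma))$ on all of $\G$. For closedness the clean device is to note that the faithful and closed hypotheses on $\rho$ say exactly that the map $\Theta:\G\to\G^{(0)}\times\Gamma\times\G^{(0)}$, $\gamma\mapsto(r(\gamma),\rho(\gamma),s(\gamma))$, is a continuous closed injection, hence a closed embedding with $\Theta(\G)$ closed in the product. Since $\Theta^{-1}(F\times\Gamma\times F)=\G_{F}$ and $\Theta^{-1}(F^{c}\times\Gamma\times F^{c})=\G_{F^{c}}$, we get $\Theta(\G_{F})=\Theta(\G)\cap(F\times\Gamma\times F)$ and $\Theta(\G_{F^{c}})=\Theta(\G)\cap(F^{c}\times\Gamma\times F^{c})$, each of which is relatively closed in the respective subspace. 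Thus $\Theta|_{\G_{F}}$ and $\Theta|_{\G_{F^{c}}}$ are again closed embeddings, which yields both closedness and (again) faithfulness of $\rho_{F}$ and $\rho_{F^{c}}$.

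For transversality I would use the remark that, $\Gamma$ being discrete, it suffices to show $q:\gamma\mapsto(\rho(\gamma),s(\gamma))$ is open on each reduction. Here I rely on the general point-set fact that if $h:Y\to Z$ is open and $A\subseteq Z$, then the corestricted restriction $h:h^{-1}(A)\to A$ is open, because $h(V\cap h^{-1}(A))=h(V)\cap A$ for every open $V\subseteq Y$. Applying this to the open map $q$ (transversality of $\rho$) with $A=\Gamma\times F$ and $A=\Gamma\times F^{c}$, and noting $q^{-1}(\Gamma\times F)=\G_{F}$ and $q^{-1}(\Gamma\times F^{c})=\G_{F^{c}}$, gives openness of $q|_{\G_{F}}$ and $q|_{\G_{F^{c}}}$, i.e. transversality of $\rho_{F}$ and $\rho_{F^{c}}$.

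The step that needs care, and the one I expect to be the main obstacle, is closedness for the \emph{open} reduction $\G_{F^{c}}$: a closed map restricted to an open subset is not closed in general, so the direct restrict-and-corestrict argument that works for the closed set $F$ fails here. The resolution is precisely to package faithfulness and closedness together into the closed embedding $\Theta$ and then invoke relative closedness of $\Theta(\G)\cap(F^{c}\times\Gamma\times F^{c})$ inside $F^{c}\times\Gamma\times F^{c}$; once this is set up, both reductions are handled uniformly. Everything else is routine topology given the hypotheses and the étale structure.
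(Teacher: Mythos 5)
Your proof is correct and follows essentially the same route as the paper's: the paper likewise restricts the map $\gamma \mapsto (r(\gamma),\rho(\gamma),s(\gamma))$, gets faithfulness for free, proves transversality by intersecting the image of $\G$ with $\Gamma\times F$ (resp.\ $\Gamma\times F^{c}$), and proves closedness by writing the image of a relatively closed set as $P(V')\cap (F^{c}\times\Gamma\times F^{c})$, i.e.\ exactly your relative-closedness-in-the-subspace device, with saturation playing the same role. Your packaging of faithful plus closed as a closed embedding $\Theta$ is a slightly tidier formulation of the same argument, not a different approach.
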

\begin{proof}
Observe that $\G_{F}$ is a closed subgroupoid and $\G_{F^{c}}$ is its open compliment. We consider them as topological groupoids in their own right using the subspace topology. We now check that these topologies are compatible with the subspace topologies in the appropriate places in the definition of $(T,C,F)$.
\begin{enumerate}
\item Faithful: as the map $P:\gamma \mapsto (r(\gamma),\rho(\gamma), s(\gamma))$ is injective, it is clear that its restriction to either $\G_{F^{c}}$ or $\G_{F}$ will also be injective.
\item Transverse: It is enough to show that $\lbrace (\rho(\gamma),s(\gamma)): \gamma \in \G_{F^{c}}\rbrace$ is open; this follows as it is precisely the intersection of $\lbrace (\rho(\gamma),s(\gamma)): \gamma \in \G \rbrace$ with $\Gamma \times F^{c}$. The same holds for $\G_{F}$.
\item Closed: We must show $P:\gamma \mapsto (r(\gamma),\rho(\gamma), s(\gamma))$ is closed. Let $V$ be a closed subset of $\G_{F^{c}}$. Then $V=V'\cap \G$, in the subspace topology. Now by saturation, we can conclude $P(V) = P(V^{'}) \cap F^{c}\times \Gamma \times F^{c}$, which is closed in the subspace topology coming from $ \G^{(0)}\times \Gamma \times \G^{(0)}$.
\end{enumerate} 
\end{proof}

Suppose the groupoid $\G(\mathcal{T}_{X})$ admits a (T,C,F)-cocycle onto a discrete group $\Gamma$. The following proposition outlines how to induce a (T,C,F)-cocycle on $G(X)$.

\begin{proposition}\label{Prop:Cocycle}
Let $X$ be a uniformly discrete metric space of bounded geometry and $\Gamma$ be a finitely generated group such that $X$ is coarsely embedded into $\Gamma$. Then the coarse groupoid $G(X)$ admits a (T,C,F)-cocycle onto $\Gamma$.
\end{proposition}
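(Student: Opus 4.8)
The plan is to build the cocycle on $G(X)$ in two stages: first realise a $(T,C,F)$-cocycle on the small groupoid $\G(\mathcal{T}_X)$, and then transport it along the decomposition $G(X) \cong \beta X \rtimes \G(\mathcal{T}_X)$ supplied by Lemma \ref{Lem:CG}. For the first stage, Corollary \ref{Cor:Trick} tells us that the inverse monoid $S = \langle \mathcal{T}_X\rangle$ is strongly $0$-F-inverse, so by the result of Milan and Steinberg (Corollary 6.17 of \cite{MR3231226}) the universal groupoid $\G_\E$ carries a $(T,C,F)$-cocycle onto the maximal group image $\Gamma$, induced by the idempotent pure map $\Phi$ of Proposition \ref{Prop:3.37}. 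By Claim \ref{Claim:C1} every germ of $\G_\E$ has a representative $[t_g,\phi]$ with $t_g \in Max(S) = \mathcal{T}_X$, so this cocycle is concretely $\rho([t_g,\phi]) = g$; it is well defined by the partition property established in Lemma \ref{Lem:PTS}, namely that $e t_i = e t_j$ forces $t_i = t_j$, so the value is independent of the chosen maximal representative. To descend this to $\G(\mathcal{T}_X)$ I would invoke Lemma \ref{Lem:Top}: the subspace $\X \subseteq \E$ is closed and, being $S$-invariant, saturated, so the reduction $\G(\mathcal{T}_X) = \G_\E|_{\X}$ inherits a $(T,C,F)$-cocycle onto $\Gamma$. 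This is precisely the hypothesis standing before the statement.

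For the second stage I would compose $\rho$ with the canonical projection $\pi\colon \beta X \rtimes \G(\mathcal{T}_X) \to \G(\mathcal{T}_X)$, $(\omega,\gamma)\mapsto \gamma$, which is a continuous groupoid homomorphism, obtaining a continuous cocycle $\tilde\rho = \rho\circ\pi$ on $G(X)$. In the concrete coarse picture this $\tilde\rho$ is the locally constant map sending an element lying in the $\beta$-closure of the graph of $t_g$ to $g$. The graphs $\{\mathrm{graph}(t_g)\}_{g\in\Gamma}$ partition $X\times X$, and since $f$ is a coarse embedding, points at bounded distance map to group elements of bounded length, so each bounded entourage $\Delta_R = \{(x,y) : d(x,y)\le R\}$ meets only finitely many graphs; their closures are therefore clopen in $G(X)$ and $\tilde\rho$ is continuous. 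Faithfulness of $\tilde\rho$ follows from faithfulness of $\rho$: the triple $(r(\gamma),\tilde\rho(\gamma),s(\gamma))$ recovers $g$ together with the images of the endpoints under the anchor, and since each $t_g$ is a partial bijection, either endpoint already determines the point within the $g$-th piece, the freeness of the action of $\G(\mathcal{T}_X)$ on $\beta X$ guaranteeing that the decomposition introduces no further ambiguity.

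The step I expect to be the main obstacle is checking that the transverse and closed conditions survive the passage from $\X$ to the larger unit space $\beta X$, where one must track how the product and quotient topologies interact. Here I would use that $\Gamma$ is discrete, so by the Remark following the definition of a $(T,C,F)$-cocycle transversality reduces to openness of $\gamma \mapsto (\tilde\rho(\gamma),s(\gamma))$; over the slice $\{g\}\times\beta X$ this map has image $\{g\}\times \overline{D_g}$, and because $D_g\subseteq X$ the closure $\overline{D_g}$ is clopen in $\beta X$, which gives openness directly. For the closed condition I would note that each $\overline{\mathrm{graph}(t_g)}$ is compact, being closed in the compact space $\beta(X\times X)$, so the image of $\gamma\mapsto(r(\gamma),\tilde\rho(\gamma),s(\gamma))$ meets each clopen slice $\beta X\times\{g\}\times\beta X$ in a compact, hence closed, set; discreteness of $\Gamma$ then upgrades this slicewise closedness to closedness of the whole image. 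Assembling faithfulness, transversality and closedness yields the required $(T,C,F)$-cocycle from $G(X)$ onto $\Gamma$.
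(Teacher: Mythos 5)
Your proposal is correct and follows essentially the same route as the paper: obtain a (T,C,F)-cocycle on $\G(\mathcal{T}_{X})$ from strong $0$-F-inverseness via Milan--Steinberg (restricting to the invariant closed subspace $\X$), then pull it back through the projection $\beta X \rtimes \G(\mathcal{T}_{X}) \rightarrow \G(\mathcal{T}_{X})$ coming from Lemma \ref{Lem:CG}, exactly as in the paper's definition $(\omega,[t,\pi(\omega)]) \mapsto \rho(t)$. The only difference is in the final verifications, where the paper invokes principality of $\G(\mathcal{T}_{X})$ and $G(X)$ and the slice structure, while you argue concretely with clopen sets $\overline{D_{g}}$ and compactness of entourage closures; both checks are sound.
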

\begin{proof}
Immediately, we know that $\G:=\G(\mathcal{T})$ admits a (T,C,F)-cocycle $\rho$: this follows from the fact that the universal groupoid of the inverse monoid $S(\mathcal{T})$ does by Lemma \ref{Lem:Cut} and Corollary 6.17 of \cite{MR3231226}. Now we define a map $G(X) \rightarrow \Gamma$ using the anchor map $\pi: \beta X \twoheadrightarrow \X$:
\begin{eqnarray*}
\Phi: \beta X \rtimes \G \rightarrow \Gamma \\
(\omega, [t,\pi(\omega)]) \mapsto \rho(t)
\end{eqnarray*}
This map comes from observing that $X\rtimes \G \rightarrow \G^{(0)}\rtimes\G$ is a closed map (as $\pi$ is closed). As both $\G$ and $G(X)$ are principal it is easy to check that this induced map $\Phi$ is both closed and faithful. 

We now check that $\Phi$ is transitive. We check this on a basis for $G(X)$: let $U \subset G(X)$ be a slice. Then $(\Phi \times s)(U)=\Phi(U) \times s(U)$ is open as $\Gamma$ is discrete and $s(U)$ is a homeomorphism on $U$.
\end{proof}

Now we have two ingredients for the main results:
\begin{enumerate}
\item Local data arising from partial translations of $X$ induced from $\Gamma$ and the fact that from a combinatorial point of view these maps form an inverse monoid with rich structure and a groupoid decomposition of $G(X)$ that is induced by the fact these translations generate the metric in the appropriate sense.
\item Results that tell us that given a (T,C,F)-cocycle on $G(X)$ it is possible to get (T,C,F)-cocycles on $X\times X$ and $G(X)|_{\partial\beta X}$ that are compatible in a natural way.
\end{enumerate}

\subsection{Counterexamples to the Baum-Connes assembly conjecture}
Now we use coarsely embedded expanders in the constructions of the previous sections to the results indicated in the introduction.

\begin{definition}\label{Def:GMG}
A finitely generated discrete group $\Gamma$ is a \textit{Gromov monster group} if there exists a large girth expander with vertex degree uniformly bounded above $X$ and a coarse embedding $f: X \hookrightarrow \Gamma$. 
\end{definition}

This definition of Gromov monster group is quite strong: groups with weakly embedded expander graphs were first shown to exist by Gromov \cite{MR1978492}, with a detailed proof given by Arzhantseva, Delzant \cite{exrangrps}. A recent construction of Osajda \cite{Osajda.2014} simplifies and improves the construction: in fact, to get groups of the nature described in this definition relies completely on the work of Osajda. 

Using Proposition \ref{Prop:Cocycle} and Theorem \ref{Thm:1.8} we have the following:

\begin{theorem}\label{Thm:3.6}
Let $\Gamma$ be a Gromov monster group with coarsely embedded large girt expander $X$. Then there is a locally compact space $\Omega_{\beta X}$ such that $G(X)$ is Morita equivalent to $\Omega_{\beta X}\rtimes \Gamma$.\qed
\end{theorem}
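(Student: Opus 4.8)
The plan is to assemble the theorem directly from Proposition \ref{Prop:Cocycle} and Theorem \ref{Thm:1.8}, so that the work lies almost entirely in checking that the hypotheses line up. First I would observe that a large girth expander $X$ with vertex degree uniformly bounded above, equipped with its graph metric, is a uniformly discrete metric space of bounded geometry: uniform discreteness is immediate from the integer-valued graph metric, and the uniform degree bound forces the cardinality of balls of each fixed radius to be uniformly bounded. Since $\Gamma$ is a Gromov monster group, Definition \ref{Def:GMG} supplies a coarse embedding $f:X \hookrightarrow \Gamma$, so the standing hypotheses of Proposition \ref{Prop:Cocycle} are met.

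Applying Proposition \ref{Prop:Cocycle}, the coarse groupoid $G(X)$ admits a $(T,C,F)$-cocycle $\rho$ onto $\Gamma$; that is, $\rho$ is transverse, closed and faithful. By Definition \ref{def:cocycle} a cocycle is in particular a continuous homomorphism, so $\rho$ is continuous, faithful, closed and transverse. These are exactly the four hypotheses demanded by Theorem \ref{Thm:1.8}.

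Next I would feed $\rho$ into Theorem \ref{Thm:1.8}. That result yields a locally compact Hausdorff space $\Omega$, realised as the quotient of $G(X)^{(0)}\times\Gamma$ by the relation $\sim$ determined by $\rho$, together with a Morita equivalence of $G(X)$ with $\Omega\rtimes\Gamma$. Here I would record that the unit space of the coarse groupoid is $G(X)^{(0)}=\beta X$, so the space produced is precisely a quotient of $\beta X\times\Gamma$; writing $\Omega_{\beta X}$ for it delivers the desired Morita equivalence of $G(X)$ with $\Omega_{\beta X}\rtimes\Gamma$, and explains the notation.

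Since every substantive assertion is handed to us by the two cited results, I do not expect a genuine obstacle. The only points requiring care are bookkeeping ones: confirming that the expander satisfies the uniformly discrete bounded geometry hypothesis of Proposition \ref{Prop:Cocycle}, noting that the continuity clause built into the notion of a $(T,C,F)$-cocycle lets Theorem \ref{Thm:1.8} apply verbatim, and correctly identifying the Morita envelope $\Omega$ with the quotient of $\beta X\times\Gamma$ that justifies naming it $\Omega_{\beta X}$.
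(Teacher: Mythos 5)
Your proposal is correct and follows exactly the route the paper intends: the theorem is stated with a \qed precisely because it is the immediate combination of Proposition \ref{Prop:Cocycle} (which supplies the $(T,C,F)$-cocycle $G(X)\to\Gamma$) and Theorem \ref{Thm:1.8} (which converts it into the Morita equivalence with $\Omega_{\beta X}\rtimes\Gamma$). Your additional bookkeeping — verifying the expander is uniformly discrete with bounded geometry and identifying $\Omega_{\beta X}$ as the quotient of $\beta X\times\Gamma$ — is exactly the hypothesis-checking the paper leaves implicit.
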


It is well known for a large girth expander $X$ of uniformly bounded vertex degree that the Baum-Connes conjecture for $G(X)$ is injective, but not surjective \cite{mypub1,explg1}. This translates, via Theorem \ref{Thm:3.6}, to:

\begin{theorem}
Let $\Gamma$ be a Gromov monster group. Then the Baum-Connes conjecture for $\Gamma$ with coefficients in $C_{0}(Y_{\beta X})$ fails to be a surjection, but is an injection.\qed
\end{theorem}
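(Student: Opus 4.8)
The plan is to feed the known behaviour of the coarse assembly map into the Morita equivalence of Theorem~\ref{Thm:3.6}, and then to re-read the resulting groupoid statement as a statement about $\Gamma$ with coefficients. Throughout I write $\Omega_{\beta X}$ for the space denoted $Y_{\beta X}$ in the statement, namely the Morita envelope produced by Theorem~\ref{Thm:3.6}.

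First I would record the input, which is already stated in the text: for a large girth expander $X$ of uniformly bounded vertex degree the Baum-Connes assembly map for the coarse groupoid $G(X)$ is injective but fails to be surjective \cite{mypub1,explg1}. This is the only place the expander hypothesis is used; everything afterwards is a formal transport of this single fact.

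Next I would invoke Morita invariance of the Baum-Connes assembly map for locally compact, Hausdorff, \'etale groupoids. A Morita equivalence induces a stable isomorphism of the reduced groupoid $C^{*}$-algebras, hence an isomorphism on the analytic side $K_{*}(C^{*}_{r}(-))$, and simultaneously an isomorphism of the topological sides (the equivariant $K$-homology of the respective classifying spaces for proper actions); these isomorphisms intertwine the two assembly maps in a commuting square. Applying this to Theorem~\ref{Thm:3.6} transports both the injectivity and the failure of surjectivity from $G(X)$ to the transformation groupoid $\Omega_{\beta X}\rtimes\Gamma$. Finally I would identify the assembly map for $\Omega_{\beta X}\rtimes\Gamma$ with the coefficient assembly map for $\Gamma$: for a transformation groupoid $\Omega\rtimes\Gamma$ the reduced groupoid $C^{*}$-algebra is precisely the reduced crossed product $C_{0}(\Omega)\rtimes_{r}\Gamma$, the topological side is $RK^{\Gamma}_{*}(\underline{E}\Gamma;C_{0}(\Omega))$, and under this dictionary the groupoid assembly map is exactly $\mu^{C_{0}(\Omega)}_{\Gamma}$. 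Combining this with the previous paragraph shows $\mu^{C_{0}(\Omega_{\beta X})}_{\Gamma}$ is injective but not surjective, which is the assertion.

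The main obstacle is the bookkeeping in the transport step: I must check that the particular Morita equivalence furnished by Theorem~\ref{Thm:3.6} genuinely fits into a commuting square of assembly maps, so that injectivity and non-surjectivity pass across unchanged, rather than merely the two $C^{*}$-algebras being abstractly stably isomorphic. Once that naturality is in place, the passage between the groupoid and the group-with-coefficients formulations of the conjecture is standard and purely formal.
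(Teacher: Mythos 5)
Your proposal is correct and follows essentially the same route as the paper: the paper's entire ``proof'' is the sentence preceding the theorem, which transports the known injectivity/non-surjectivity of the assembly map for $G(X)$ across the Morita equivalence of Theorem \ref{Thm:3.6} and reads the result as the Baum-Connes conjecture for $\Gamma$ with coefficients in $C_{0}(\Omega_{\beta X})$. Your added care about the naturality of the assembly maps under Morita equivalence and the dictionary between transformation-groupoid assembly and group-with-coefficients assembly is exactly the bookkeeping the paper leaves implicit.
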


\subsection{A different proof of non-K-exactness and the failure of the Baum-Connes conjecture}

We will now be much more explicit about this failure using the groupoid techniques from \cite{MR1911663}; we consider the associated ladder diagram in K-theory and K-homology coming from the decomposition of $G(X)$ into $G(X)|_{\partial\beta X}$ and $X\times X$. The following is essentially included in \cite{MR1911663}

\begin{theorem}Let $\Gamma$ be a finitely generated discrete group that coarsely contains a large girth expander $X$. Then $\Gamma$ is not $K$-exact.
\end{theorem}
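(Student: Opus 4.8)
The plan is to produce one short exact sequence of $\Gamma$-$C^{*}$-algebras on which the reduced crossed product fails to be exact in $K$-theory; since $K$-exactness of $\Gamma$ means that $-\rtimes_{r}\Gamma$ sends \emph{every} such sequence to a six-term exact sequence, a single failure suffices. The sequence I would use is the one attached to the open--closed decomposition $\beta X = X \sqcup \partial\beta X$ of the unit space of $G(X)$, namely
\[
0 \longrightarrow C_{0}(\Omega_{X}) \longrightarrow C_{0}(\Omega_{\beta X}) \longrightarrow C_{0}(\Omega_{\partial\beta X}) \longrightarrow 0,
\]
where $\Omega_{X}$ is the open $\Gamma$-invariant subspace of $\Omega_{\beta X}$ corresponding to $X \subset \beta X$. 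By Theorem \ref{Thm:3.6}, Theorem \ref{Thm:MTb}, and the fact that the open restriction $G(X)|_{X}$ is a disjoint union of finite pair groupoids, the reduced crossed products of the three coefficient algebras are Morita equivalent to $C^{*}_{r}(G(X)|_{X})$, $C^{*}_{r}(G(X))$ and $C^{*}_{r}(G(X)|_{\partial\beta X})$ respectively. It therefore suffices to show that the reduced $K$-theory six-term sequence attached to
\[
0 \longrightarrow C^{*}_{r}(G(X)|_{X}) \longrightarrow C^{*}_{r}(G(X)) \longrightarrow C^{*}_{r}(G(X)|_{\partial\beta X}) \longrightarrow 0
\]
is \emph{not} exact, and then to carry this failure back to $\Gamma$ along the Morita equivalences.

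To detect the non-exactness I would assemble the ladder indicated in the statement, following the Higson--Lafforgue--Skandalis mechanism \cite{MR1911663}: its top row is the $K$-homology (topological, left-hand) six-term sequence of the decomposition, its bottom row is the reduced $K$-theory six-term sequence above, and the vertical maps are the reduced coarse assembly maps of the three groupoids. The top row is exact by the standard exactness of the topological side for an open--closed decomposition of the unit space. The two outer vertical maps are isomorphisms: the assembly map for $G(X)|_{X}$ because this groupoid is amenable, and the assembly map for $G(X)|_{\partial\beta X}$ because it is a-T-menable (Theorem \ref{Thm:MTb} together with \cite{mypub1}) and a-T-menable groupoids satisfy the Baum--Connes conjecture.

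Now suppose, for contradiction, that the bottom row were exact. With both rows exact and the four relevant outer vertical maps isomorphisms, the five lemma forces the middle vertical map --- the reduced coarse assembly map for $G(X)$ --- to be an isomorphism. This contradicts the known fact that for a large girth expander the assembly map for $G(X)$ is injective but not surjective \cite{mypub1,explg1}. Hence the bottom row is not exact. Transporting this along the $K$-theory isomorphisms furnished by the Morita equivalences shows that $-\rtimes_{r}\Gamma$ applied to the coefficient sequence above does not yield an exact six-term sequence, i.e. $\Gamma$ is not $K$-exact.

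The main obstacle is not the diagram chase but making the bridge between the groupoid and the group precise. I must verify that the Morita equivalences of Theorems \ref{Thm:3.6} and \ref{Thm:MTb} are compatible with the ideal--quotient structure --- that the open invariant set $X \subset \beta X$ corresponds to $\Omega_{X}$, that $C_{0}(\Omega_{X})\rtimes_{r}\Gamma$ is Morita equivalent to $C^{*}_{r}(G(X)|_{X})$, and that the restriction maps match up --- so that the two six-term sequences are genuinely identified and the naturality of the assembly maps is respected across the ladder. The remaining non-formal inputs, namely the non-surjectivity of the reduced assembly map for the expander and the a-T-menability of the boundary groupoid, are quoted from \cite{mypub1,explg1}; organising them so that the five lemma applies is where the care lies.
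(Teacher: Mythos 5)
Your overall architecture matches the paper's: the same coefficient sequence $0 \to C_{0}(\Omega_{X}) \to C_{0}(\Omega_{\beta X}) \to C_{0}(\Omega_{\partial\beta X}) \to 0$, the same Morita equivalences, and the same transport of a $K$-theoretic failure from the coarse groupoid to the crossed products. The divergence is in the key input: the paper simply \emph{quotes} from \cite{MR1911663,explg1,mypub1} the fact that the sequence of abelian groups $K_{*}(\mathcal{K}) \to K_{*}(C^{*}_{r}(G(X))) \to K_{*}(C^{*}_{r}(G(X)|_{\partial\beta X}))$ is not exact in the middle, and then carries this across the Rieffel isomorphisms. You instead try to re-derive that fact from the injectivity-but-non-surjectivity of the assembly map for $G(X)$ via an assembly-map ladder and the five lemma, and this is where your argument breaks.

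The gap is concrete: you write $0 \to C^{*}_{r}(G(X)|_{X}) \to C^{*}_{r}(G(X)) \to C^{*}_{r}(G(X)|_{\partial\beta X}) \to 0$ as a short exact sequence of $C^{*}$-algebras and speak of ``the reduced $K$-theory six-term sequence attached to'' it. For a large girth expander this sequence is \emph{not} exact: the kernel of $C^{*}_{r}(G(X)) \to C^{*}_{r}(G(X)|_{\partial\beta X})$ is the ideal of ghost operators, which strictly contains $\mathcal{K} = C^{*}_{r}(G(X)|_{X})$ (it contains the non-compact Kazhdan-type projection) --- indeed this non-exactness is essentially the phenomenon the whole theorem is about. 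Consequently there are no boundary maps and no six-term ``bottom row'', so the hypothesis ``suppose the bottom row were exact'' cannot be fed into the five lemma as stated; and if you instead assume only that the three-term $K$-theory sequence is exact in the middle, the five lemma does not apply (no exactness at the outer positions of any five-term segment downstairs). If you repair this by taking algebra-level exactness as the contradiction hypothesis, your conclusion weakens to ``$\Gamma$ is not exact'', which is strictly weaker than the claimed ``$\Gamma$ is not $K$-exact''. A correct version of your chase does exist --- assume middle exactness of the three-term $K$-theory sequence, use the six-term exactness of the \emph{topological} row (which does exist, by Higson--Lafforgue--Skandalis), the isomorphism $\mu_{A}$ together with $K_{1}(\mathcal{K})=0$ to lift across the missing boundary map, surjectivity of $\mu_{C}$, and the commutativity of the left square to see that the image of $K_{0}(\mathcal{K})$ lands in the image of $\mu_{B}$; this forces $\mu_{B}$ surjective, a contradiction --- but this refined diagram chase, not the five lemma, is what is needed, and it is exactly the content of the references the paper cites rather than something one gets for free.
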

\begin{proof}
Let $\mathcal{T}$ be the collection of partial translations obtained as in Example \ref{ex:1} by truncating the action of $\Gamma$ to $X$. Using Lemma \ref{Lem:CG} and Proposition \ref{Prop:Cocycle}, we can show that this collection decomposes the coarse groupoid $G(X)$ as $\beta X \rtimes \G(\mathcal{T})$, and subsequently that $G(X)$ admits a (T,C,F)-cocycle onto $\Gamma$. From this cocycle we construct a locally compact Hausdorff space $\Omega_{\beta X}$ using the relation $\sim$. It then follows from Theorem \ref{Thm:1.8} that $G(X) \cong \beta X \rtimes \G(\mathcal{T})$ is Morita equivalent to $\Omega_{\beta X}\rtimes \Gamma$. 

To construct the complete sequence we use Lemma \ref{Lem:Cut} to get $\Omega_{X}:= (X \times G)/\sim$ and $\Omega_{\partial\beta X}:= (\partial\beta X \times G)/\sim$. We then get the short exact sequence of $\Gamma$-algebras:
\begin{equation*}
0 \rightarrow C_{0}(\Omega_{X}) \rightarrow C_{0}(\Omega_{\beta X}) \rightarrow C_{0}(\Omega_{\partial \beta X}) \rightarrow 0.
\end{equation*}
We remark here also that by the theorem of Khoskham and Skandalis \cite{MR1900993} these Morita equivalences induce strong Morita equivalences of $C^{*}$-algebras. As $\Gamma$ is countable and $\beta X$ is $\sigma$-compact we can deduce that each of the enveloping spaces are also $\sigma$-compact, hence the cross product algebras are all $\sigma$-unital. From work of Rieffel \cite{MR679708} we have long exact sequences of K-theory groups in which all the vertical maps are isomorphisms:
\begin{equation*}
\xymatrix@=1em{...\ar[r] & K_{0}(C_{0}(\Omega_{X})\rtimes G) \ar[r]& K_{0}(C_{0}(\Omega_{\beta X})\rtimes G) \ar[r]& K_{0}(C_{0}(\Omega_{\partial\beta X})\rtimes G)\ar[r] & ...\\
...\ar[r] & K_{0}(\mathcal{K}) \ar[r]\ar[u]^{\ucong}& K_{0}(C^{*}_{r}(G(X))) \ar[r]\ar[u]^{\ucong}& K_{0}(C^{*}_{r}(G(X)|_{\partial\beta X})) \ar[r]\ar[u]^{\ucong}& ...}
\end{equation*}
We can conclude the result by observing that the bottom line is not exact as a sequence of abelian groups by either \cite{MR1911663}, \cite{explg1} or \cite{mypub1}. It follows therefore that the sequence:
\begin{equation*}
0 \rightarrow C_{0}(\Omega_{X})\rtimes_{r} G \rightarrow C_{0}(\Omega_{\beta X})\rtimes_{r} G \rightarrow C_{0}(\Omega_{\partial\beta X})\rtimes_{r} G \rightarrow 0
\end{equation*}
is not exact in the middle term.
\end{proof}

\section{Positive results for the Baum-Connes conjecture for a Gromov monster group}

The aim in this final section is to prove positive results about the Baum-Connes conjecture with certain coefficients for a Gromov monster group $\Gamma$. To do this we will extend the techniques in the previous section using ideas from \cite{mypub1}. To that end, we introduce the following definition: 

\begin{definition}
A uniformly discrete metric space $X$ with bounded geometry is said to be \textit{a-T-menable at infinity} if the coarse boundary groupoid $G(X)_{\partial\beta X}$ is a-T-menable in the sense of \cite{MR1703305}, i.e it admits a (locally) proper negative type function. 
\end{definition}

Examples of spaces that are a-T-menable at infinity are spaces that coarsely embed into Hilbert space, or more generally fibred coarsely embed into Hilbert space \cite{mypub2}. We recall the outcome of \cite{mypub1} in the following Proposition:

\begin{proposition}\label{Prop:Outsourced}
Let $X$ be a large girth expander with vertex degree uniformly bounded above. Then $X$ is a-T-menable at infinity. \qed
\end{proposition}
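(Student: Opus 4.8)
The plan is to exhibit a single explicit function and verify it has the required properties, rather than to build a Hilbert-space action by hand. The natural candidate is the graph metric itself: the distance function $d \colon X \times X \to \mathbb{N}$ is bounded (taking values in $\{0,1,\dots,R\}$) on each generating slice $\Delta_R = \{(x,y): d(x,y)\le R\}$, so it is locally constant there and therefore extends to a continuous, integer-valued function $\ell$ on $G(X) = \overline{\bigcup_R \Delta_R} \subseteq \beta(X\times X)$; restricting gives $\ell \colon G(X)|_{\partial\beta X} \to \mathbb{N}$. First I would record the structural properties: $\ell$ vanishes on the unit space (the diagonal), satisfies $\ell(\gamma^{-1}) = \ell(\gamma)$ since $d$ is symmetric, and is proper, because $\ell^{-1}([0,N]) = \overline{\Delta_N} \cap (\partial\beta X \times \partial\beta X)$ is the intersection of a compact set with a closed set, hence compact. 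Thus the whole content of the proposition is to show that $\ell$ is conditionally of negative type on the groupoid.

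The heart of the argument is that large girth forces $\ell$ to behave like a tree metric in every fibre. Concretely, I would fix a unit $\omega \in \partial\beta X$ and finitely many arrows $\gamma_1,\dots,\gamma_m \in G(X)^{\omega}$ together with scalars $\lambda_1,\dots,\lambda_m$ with $\sum_i \lambda_i = 0$, and verify $\sum_{i,j}\lambda_i\lambda_j\, \ell(\gamma_i^{-1}\gamma_j) \le 0$. Realising the situation along an ultrafilter $\mathcal{U}$, each $\gamma_i$ is represented by a net $(y, x^{(i)})$ sharing the common range coordinate $y \to \omega$, with $x^{(i)} \to s(\gamma_i)$ and $d(y,x^{(i)}) \le R$ for a fixed $R$. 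Then $\gamma_i^{-1}\gamma_j$ is represented by $(x^{(i)}, x^{(j)})$, so $\ell(\gamma_i^{-1}\gamma_j) = \lim_{\mathcal{U}} d(x^{(i)}, x^{(j)})$. Since the components of $X$ are finite and placed pairwise far apart, a boundary ultrafilter has component index tending to infinity along $\mathcal{U}$; as the girth of a large girth expander grows with the component, for $\mathcal{U}$-almost every point the ball $B(y,R)$ lies in a component of girth exceeding $2R$, and such a ball is a genuine tree on which the graph metric coincides with the tree metric. Crucially, all the points $x^{(i)}$ lie within distance $R$ of the common centre $y$, hence inside this single tree.

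Because a tree metric is conditionally of negative type, for each such point we have $\sum_{i,j}\lambda_i\lambda_j\, d(x^{(i)}, x^{(j)}) \le 0$, and the set of points where this holds belongs to $\mathcal{U}$. Taking the limit of a $\mathcal{U}$-almost-everywhere nonpositive quantity yields $\sum_{i,j}\lambda_i\lambda_j\,\ell(\gamma_i^{-1}\gamma_j) \le 0$, which is precisely the conditional negative type inequality. Combined with the structural properties above, this shows $\ell$ is a (locally) proper negative type function, so $G(X)|_{\partial\beta X}$ is a-T-menable and $X$ is a-T-menable at infinity.

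The hard part will be the bookkeeping of the ultrafilter realisation in the previous step: making precise that finitely many arrows with a common range can be represented by nets sharing a single range coordinate and a single ultrafilter, so that the points $x^{(i)}$ genuinely sit in one ball $B(y,R)$ and the finitary negative type inequality can be read off point-by-point and then passed to the limit. This is where the principal, \'etale structure of the coarse groupoid and its description as a closure in $\beta(X\times X)$ do the real work; once the configuration is pinned inside a single large-girth ball, the inequality is immediate from the tree case and is stable under the $\mathcal{U}$-limit. I would also remark that this is the content drawn from \cite{mypub1}, and that the same scheme applies verbatim to spaces that fibred coarsely embed into Hilbert space, since there too the obstruction to negative type is confined to the finite part and disappears on the boundary.
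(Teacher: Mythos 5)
Your proposal is correct in substance, but note that the paper itself offers no proof of this proposition: it is stated with a \qed and explicitly ``recalls the outcome of \cite{mypub1}''. What you have written is essentially a reconstruction of the argument from that cited reference --- extend the metric $d$ to a continuous, integer-valued, proper function $\ell$ on $G(X)=\bigcup_R\overline{\Delta_R}$, restrict to the boundary, and use large girth to reduce the conditional negative type inequality to the case of tree metrics via an ultrafilter localisation. So rather than a different route, you have supplied the proof that the paper outsources, and the scheme you follow is the standard one. The ``bookkeeping'' step you flag as the hard part is handled rigorously by a standard device you do not name: by bounded geometry, $\Delta_R$ decomposes into finitely many partial translations, whose graphs are disjoint subsets of the discrete space $X\times X$ and hence have disjoint clopen closures covering $\overline{\Delta_R}$; each $\gamma_i\in G(X)^{\omega}$ then lies in the closure of the graph of a single partial translation $t_{k_i}$ with $\mathrm{dom}(t_{k_i})\in\omega$, and intersecting these finitely many domains (still a member of $\omega$) produces exactly the common realisation $y\mapsto(y,x^{(i)}(y))$ you need, with $\gamma_i^{-1}\gamma_j=\lim_{\omega}(x^{(i)}(y),x^{(j)}(y))$ by continuity of the groupoid operations.

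One quantitative slip needs fixing: girth exceeding $2R$ is not enough. A single cycle of length $2R+1$ lies inside the $R$-ball around any of its vertices, so the ball need not be a tree; worse, even when girth $=2R+2$ makes $B(y,R)$ a tree, the ambient graph metric can disagree with the tree metric (two points near the ends of the deleted cycle are close in the graph but far in the induced tree), and it is the ambient metric that $\ell$ computes. The correct statement is that if girth exceeds $4R+2$, then $B(y,2R)$ is a tree $T'$ and every $X$-geodesic between points of $B(y,R)$ stays in $B(y,2R)$, so $d$ restricted to $B(y,R)$ coincides with $d_{T'}$; then Haagerup's inequality for tree metrics applies. Since the girth tends to infinity, this stronger requirement still holds for all but finitely many components, i.e.\ for $\omega$-almost every $y$, and the rest of your limit argument goes through unchanged.
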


Using the Theorem \ref{Thm:1.8} and Proposition \ref{Prop:Outsourced} we will prove that the groupoid $Y_{\partial\beta X}\rtimes \Gamma$ is a-T-menable. From here, using results of Tu, we can conclude that the Baum-Connes conjecture holds for this groupoid with any coefficients.

\begin{theorem}\label{Thm:MT2}
Let $\Gamma$ be a finitely generated group that coarsely contains a large girth expander $X$ with uniformly bounded vertex degree. Then the groupoid $\Omega_{\partial\beta X}\rtimes \Gamma$ is a-T-menable.
\end{theorem}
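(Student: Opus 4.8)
The plan is to obtain the a-T-menability of $\Omega_{\partial\beta X}\rtimes\Gamma$ by transporting it across a Morita equivalence from the coarse boundary groupoid, which we already know to be a-T-menable by Proposition \ref{Prop:Outsourced}. The essential observation is that a-T-menability, being defined in terms of a locally proper negative type function, is a Morita invariant; so it suffices to realise $\Omega_{\partial\beta X}\rtimes\Gamma$ as Morita equivalent to a groupoid we already understand.

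First I would produce the Morita equivalence itself, which is the content of Theorem \ref{Thm:MTb}. By Proposition \ref{Prop:Cocycle} the coarse groupoid $G(X)$ carries a $(T,C,F)$-cocycle onto $\Gamma$. The boundary $\partial\beta X = \beta X \setminus X$ is a closed subset of the unit space $\beta X$, since $X$ sits inside $\beta X$ as an open discrete subspace; moreover it is saturated, because the generating partial translations move points uniformly bounded distances and hence preserve the decomposition of $\beta X$ into $X$ and its corona. Applying Lemma \ref{Lem:Top} with $F = \partial\beta X$, the reduction $G(X)|_{\partial\beta X}$ inherits a $(T,C,F)$-cocycle onto $\Gamma$. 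Feeding this cocycle into Theorem \ref{Thm:1.8} then yields a locally compact Hausdorff $\Gamma$-space $\Omega_{\partial\beta X}$ together with a Morita equivalence of $G(X)|_{\partial\beta X}$ with $\Omega_{\partial\beta X}\rtimes\Gamma$.

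Next I would invoke Proposition \ref{Prop:Outsourced}: a large girth expander of uniformly bounded vertex degree is a-T-menable at infinity, which is precisely the statement that $G(X)|_{\partial\beta X}$ is a-T-menable. Combining this with the Morita equivalence just established, and with the fact that a-T-menability passes across Morita equivalence of locally compact groupoids (the results of \cite{mypub1}), we conclude that $\Omega_{\partial\beta X}\rtimes\Gamma$ is a-T-menable.

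The main obstacle is this final step: the Morita invariance of a-T-menability. Concretely, one must transport a locally proper negative type function from $G(X)|_{\partial\beta X}$ to $\Omega_{\partial\beta X}\rtimes\Gamma$ along the linking groupoid implementing the equivalence, which is where I would appeal to the analysis of negative type functions for groupoids in the sense of Tu \cite{MR1703305} together with the machinery of \cite{mypub1}. The subtlety worth checking is that the negative type function supplied by Proposition \ref{Prop:Outsourced} remains \emph{locally} proper after transport: properness is only demanded over compact subsets of the unit space, so one must verify that the Morita equivalence respects this local condition rather than merely the algebraic structure of the negative type function.
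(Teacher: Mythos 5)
Your proposal is correct and follows essentially the same route as the paper: the Morita equivalence of $G(X)|_{\partial\beta X}$ with $\Omega_{\partial\beta X}\rtimes\Gamma$ via Proposition \ref{Prop:Cocycle}, Lemma \ref{Lem:Top} applied to the closed saturated set $\partial\beta X$, and Theorem \ref{Thm:1.8}, followed by Proposition \ref{Prop:Outsourced} and the Morita invariance of a-T-menability from \cite{MR1703305}. In fact you supply slightly more detail than the paper does, notably the justification that $\partial\beta X$ is saturated because the partial translations preserve $X$ and hence its corona.
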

\begin{proof}
By Proposition \ref{Prop:Cocycle} and Theorem \ref{Thm:1.8} the groupoid $G(X)|_{\partial\beta X}$ is Morita equivalent to $\Omega_{\partial\beta X}\rtimes \Gamma$. Explicitly,the decomposition of $G(X)$ as $\beta X \rtimes \G(\mathcal{T})$ for the natural translation structure associated to $X$ as a metric subspace of $\Gamma$ and Lemma \ref{Lem:Top} applied to the closed saturated subset $\partial\beta X$. The result now follows as a-T-menability for groupoids is an invariant of Morita equivalences (see \cite{MR1703305}).
\end{proof}

This has a natural corollary:

\begin{corollary}
Let $\Gamma$ be a finitely generated group that coarsely contains a large girth expander $X$. Then the Baum-Connes conjecture for $\Gamma$ with coefficients in any $(\Omega_{\partial\beta X}\rtimes \Gamma)$-$C^{*}$-algebra is an isomorphism.\qed
\end{corollary}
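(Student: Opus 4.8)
The plan is to deduce the statement entirely from Theorem~\ref{Thm:MT2} together with the Baum--Connes machinery for a-T-menable groupoids. Writing $G := \Omega_{\partial\beta X}\rtimes\Gamma$, Theorem~\ref{Thm:MT2} gives that $G$ is a-T-menable, and the first step is to invoke Tu's theorem \cite{MR1703305}: a second countable, locally compact, a-T-menable \'etale groupoid satisfies the Baum--Connes conjecture with coefficients, so the assembly map $\mu_{G,A}\colon K^{top}_{*}(G;A)\to K_{*}(A\rtimes_{r}G)$ is an isomorphism for every $G$-$C^{*}$-algebra $A$. This already settles the conjecture for the groupoid $G$; the remaining work is to transport it into a statement about the group $\Gamma$.

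Second, I would make precise the identification of coefficient categories. A $(\Omega_{\partial\beta X}\rtimes\Gamma)$-$C^{*}$-algebra is a $\Gamma$-$C^{*}$-algebra $A$ equipped with a $\Gamma$-equivariant nondegenerate central homomorphism $C_{0}(\Omega_{\partial\beta X})\to ZM(A)$; in particular every such $A$ is a genuine $\Gamma$-$C^{*}$-algebra, so the group assembly map $\mu_{\Gamma,A}$ is defined. The structural fact to record here is that for such $A$ the groupoid reduced crossed product coincides with the group reduced crossed product, $A\rtimes_{r}G\cong A\rtimes_{r}\Gamma$, so the two assembly maps have the same target.

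Third, I would match the left-hand (topological) sides. For a transformation groupoid $G=\Omega\rtimes\Gamma$ one may take the model $\underline{E}G = \Omega\times\underline{E}\Gamma$ with the diagonal $\Gamma$-action and the projection to $\Omega$ as anchor map: the $\Gamma$-action on $\underline{E}\Gamma$ is proper, so this is a proper $G$-space, and any proper $G$-space maps to it equivariantly over $\Omega$. With this model the $G$-equivariant $KK$-theory of proper $G$-spaces with coefficients in $A$ reduces to the corresponding $\Gamma$-equivariant groups, yielding a natural isomorphism $K^{top}_{*}(G;A)\cong K^{top}_{*}(\Gamma;A)$. Granting that this isomorphism is compatible with assembly, $\mu_{\Gamma,A}$ is identified with $\mu_{G,A}$, and the isomorphism of the latter from the first step gives the result for all $(\Omega_{\partial\beta X}\rtimes\Gamma)$-$C^{*}$-algebras $A$.

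The hard part will be the third step: verifying that the identification $K^{top}_{*}(G;A)\cong K^{top}_{*}(\Gamma;A)$ is genuinely compatible with the assembly maps, i.e. that the descent and Kasparov-product constructions on the groupoid side correspond to those on the group side under the anchoring over $\Omega_{\partial\beta X}$. This is the only point where one must appeal to the general comparison between the groupoid and group formulations of Baum--Connes rather than to the a-T-menable input of Theorem~\ref{Thm:MT2}; once it is in place the corollary is immediate.
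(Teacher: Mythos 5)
Your proposal is correct and follows essentially the paper's own route: the paper deduces this corollary immediately from Theorem \ref{Thm:MT2} via Tu's theorem \cite{MR1703305}, leaving implicit exactly the identifications you spell out in your second and third steps (that $A\rtimes_{r}(\Omega_{\partial\beta X}\rtimes\Gamma)\cong A\rtimes_{r}\Gamma$ for an $(\Omega_{\partial\beta X}\rtimes\Gamma)$-$C^{*}$-algebra $A$, and that the groupoid assembly map agrees with the group assembly map for such coefficients). One caveat applying equally to you and to the paper: $\partial\beta X$, and hence $\Omega_{\partial\beta X}$, is not second countable, so Tu's theorem must be invoked in a form valid for $\sigma$-compact groupoids (cf.\ the discussion of this point for coarse groupoids in \cite{MR1905840}) rather than under the second countability hypothesis as you state it.
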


We remark that different techniques that rely on a-T-menability at infinity were considered in \cite{BGW-exact2013} to obtain a similar result.

\bibliography{ref.bib}

\end{document}